\let\defn\textbf
\begin{document}

\title{Borel structurability by locally finite simplicial complexes}
\author{Ruiyuan Chen\thanks{Research partially supported by NSERC PGS D}}
\date{}
\maketitle

\begin{abstract}
We show that every countable Borel equivalence relation structurable by $n$-dimensional contractible simplicial complexes embeds into one which is structurable by such complexes with the further property that each vertex belongs to at most $M_n := 2^{n-1}(n^2+3n+2)-2$ edges; this generalizes a result of Jackson-Kechris-Louveau in the case $n = 1$.  The proof is based on that of a classical result of Whitehead on countable CW-complexes.
\end{abstract}

\section{Introduction}
\label{sec:intro}

A \defn{countable Borel equivalence relation} $E$ on a standard Borel space $X$ is a Borel equivalence relation $E \subseteq X^2$ for which each equivalence class is countable.  The class of \defn{treeable} countable Borel equivalence relations, for which there is a Borel way to put a tree (acyclic connected graph) on each equivalence class, has been studied extensively by many authors, especially in relation to ergodic theory; see e.g., \cite{Ada}, \cite{Ga1}, \cite{JKL}, \cite{KM}, \cite{HK}, \cite{Hjo}.  It is a basic result, due to Jackson-Kechris-Louveau \cite[3.10]{JKL}, that every treeable equivalence relation embeds into one treeable by trees in which each vertex has degree at most 3.  The purpose of this paper is to present a generalization of this result to higher dimensions.

Recall that a \defn{simplicial complex} on a set $X$ is a collection $S$ of finite nonempty subsets of $X$ which contains all singletons and is closed under nonempty subsets.
A simplicial complex $S$ has a \defn{geometric realization} $\abs{S}$, which is a topological space formed by gluing together Euclidean simplices according to $S$ (see \cref{sec:prelims} for the precise definition); $S$ is \defn{contractible} if $\abs{S}$ is.
Given a distinguished class $\@K$ of simplicial complexes (e.g., the contractible ones) and a countable Borel equivalence relation $(X, E)$, a \defn{(Borel) structuring of $E$ by simplicial complexes in $\@K$} is, informally (see \cref{sec:prelims}), a Borel assignment of a simplicial complex $S_C \in \@K$ on each equivalence class $C \in X/E$.  If such a structuring exists, we say that $E$ is \defn{structurable by complexes in $\@K$}.
We are interested here mainly in $\@K =$ $n$-dimensional contractible simplicial complexes; when $n = 1$, we recover the notion of treeability.  The study of equivalence relations structurable by $n$-dimensional contractible simplicial complexes was initiated by Gaboriau \cite{Ga2}, who proved (among other things) that for $n = 1, 2, 3, \dotsc$ these classes of countable Borel equivalence relations form a strictly increasing hierarchy under $\subseteq$.

Recall also the notion of a \defn{Borel embedding} $f : E -> F$ between countable Borel equivalence relations $(X, E)$ and $(Y, F)$, which is an injective Borel map $f : X -> Y$ such that $x \mathrel{E} x' \iff f(x) \mathrel{F} f(x')$ for all $x, x' \in X$.

\begin{theorem}
\label{thm:lfsc-cber}
Let $n \ge 1$, and let $(X, E)$ be a countable Borel equivalence relation structurable by $n$-dimensional contractible simplicial complexes.  Then $E$ Borel embeds into a countable Borel equivalence relation $(Y, F)$ structurable by $n$-dimensional contractible simplicial complexes in which each vertex belongs to at most (or even exactly) $M_n := 2^{n-1}(n^2+3n+2)-2$ edges.
\end{theorem}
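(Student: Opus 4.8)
The plan is to mimic the classical Whitehead argument that every connected $n$-dimensional CW-complex is homotopy equivalent to one with bounded local combinatorics, and to carry it out Borel-uniformly and simplicially over the equivalence relation. The starting point is a Borel structuring $C \mapsto S_C$ of $E$ by $n$-dimensional contractible simplicial complexes, which (after passing to the canonical structuring on a larger space consisting of pairs $(x, s)$ with $x$ a vertex of $S_{[x]_E}$ and $s$ some choice of local data) we may treat as a genuine Borel family of complexes. The key point from Whitehead's proof is local: given a simplicial complex, one repeatedly performs a "stellar" or "barycentric-type" subdivision followed by a collapse that trades high-valence vertices for more vertices of controlled valence, while preserving the homotopy type — hence contractibility — and the dimension. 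The numerical bound $M_n = 2^{n-1}(n^2+3n+2)-2$ should fall out of counting, in the worst case, how many edges survive at a vertex after the standard $n$-dimensional local modification (the $n=1$ case giving $M_1 = 3$, matching Jackson–Kechris–Louveau).

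First I would isolate the purely combinatorial lemma: there is an operation $S \mapsto S'$ on $n$-dimensional contractible simplicial complexes such that (i) $|S'|$ is homotopy equivalent to $|S|$ (so $S'$ is again contractible and $n$-dimensional), (ii) every vertex of $S'$ lies in at most $M_n$ edges, and (iii) the operation is \emph{local and canonical} — the star of a vertex $v'$ of $S'$ depends only on a bounded neighborhood of the corresponding data in $S$, with no arbitrary choices, so that it commutes with simplicial automorphisms. Property (iii) is what makes the operation Borel-uniformizable: the new vertex set $Y$ is a Borel subset of (a finite power of) $X$ consisting of the bounded configurations in the $S_C$'s used to index the new vertices, the new equivalence relation $F$ is the induced one, and the new structuring assigns to each $F$-class the complex $S'_C$. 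The induced map $E \to F$ sending $x$ to a canonical vertex of $S'_{[x]_E}$ built near $x$ — e.g. the vertex coming from an original vertex, if the subdivision keeps old vertices — is then a Borel embedding, since $E$ and $F$ have the same classes up to this Borel bijection of underlying sets. If instead the local modification moves vertices around, I would instead record $x$ together with a path/face connecting it to a chosen new vertex, which is still a bounded amount of data.

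The main obstacle, as usual in this area, is not the topology but the \emph{canonicity/choice-freeness} of the local reduction, combined with getting the exact constant $M_n$. Whitehead's original collapse involves choosing orderings of vertices and of the simplices in a star, and one must either (a) show that a single global Borel linear order on $X$ — which always exists on a standard Borel space — suffices to make all the local choices simultaneously and coherently, or (b) replace the ordered construction by an unordered one (e.g. a symmetrized subdivision such as the barycentric subdivision, whose simplices are canonically indexed by flags) at the cost of a worse constant, and then argue the sharp constant by a separate more careful local analysis. I expect the bulk of the work to be: (1) writing down the explicit $n$-dimensional local move and verifying the homotopy equivalence via an elementary collapse / simplicial mapping cylinder argument; (2) the edge count giving $M_n$; and (3) the bookkeeping that turns "local and canonical" into "Borel", which should be routine given a Borel linear order on $X$ and the standard machinery for canonical structurings recalled in \cref{sec:prelims}. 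The "or even exactly $M_n$" clause should follow by a final padding step: attach to each class a fixed Borel-chosen $n$-dimensional contractible complex all of whose vertices have valence exactly $M_n$, glued along a single simplex, which does not change the homotopy type and can be done uniformly.
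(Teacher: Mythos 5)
There is a genuine gap at the heart of your plan: the mechanism you propose cannot produce local finiteness. Your combinatorial lemma asks for an operation that is ``local and canonical'' in the sense that the star of each new vertex depends only on a bounded neighborhood of bounded configurations in $S$, obtained by iterating a stellar/barycentric-type subdivision plus collapse. But the input complex is only locally \emph{countable}: a vertex $x$ may lie in infinitely many edges, and any such bounded-local move leaves it (or the bounded package of data replacing it) with infinite valence --- e.g.\ in a barycentric subdivision $x$ is still joined to the barycenters of all the infinitely many simplices containing it, and iterating bounded-local moves cannot converge to a locally finite complex. What is actually needed, both in Whitehead's argument and in the paper, is a \emph{global} spreading-out: multiply the complex by a ray and attach each $k$-simplex at a position along the ray given by a coloring of the intersection graph on $k$-simplices, so that simplices sharing a vertex land at different positions; homotopy equivalence is then maintained not by elementary collapses but by telescope/mapping-cylinder subcomplexes and nerve-type gluing lemmas (\cref{lm:telescope-props}, \cref{lm:pushout}, \cref{lm:dirunion}, \cref{lm:quillena}). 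Correspondingly, the Borel ingredient is not a Borel linear order making local choices canonical --- by non-smoothness there is in general no Borel way to canonically enumerate, in order type $\omega$, the simplices meeting a given vertex --- but the Kechris--Miller Borel $\mathbb{N}$-coloring lemma for the intersection graph on $[E]^{<\infty}$ (\cref{lm:coloring}). The new vertex set is then simply $X \times \mathbb{N}^n$ (not a set of bounded configurations inside powers of $X$), $F$ is the pullback of $E$ under the projection, and the embedding is $x \mapsto (x,0,\dots,0)$. Your plan, as written, identifies Whitehead's theorem as the model but replaces its essential device (spreading cells along a ray) with a local subdivision scheme that cannot handle infinite stars, and it leaves both the homotopy-equivalence verification and the edge count as placeholders; note also that your expectation $M_1=3$ is inconsistent with the formula you are proving, which gives $M_1=4$ (the value $3$ is the separate, sharper Jackson--Kechris--Louveau bound).

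The final ``exactly $M_n$'' step is also incorrect as stated: gluing one fixed gadget per class along a single simplex leaves every other vertex of deficient valence untouched (and can push the vertices of the glued simplex above $M_n$). The repair has to act at \emph{every} deficient vertex: attach a new pendant edge at each vertex with fewer than $M_n$ edges, and iterate this $\omega$ many times so that the newly created pendant vertices are themselves brought up to valence exactly $M_n$; each stage is a deformation retract of the next, so the union is still homotopy equivalent, $n$-dimensional, and Borel over the classes.
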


In particular, every $E$ structurable by $n$-dimensional contractible simplicial complexes Borel embeds into an $F$ structurable by locally finite such complexes, where a simplicial complex is \defn{locally finite} if each vertex is contained in finitely many edges (or equivalently finitely many simplices).  The constant $M_n$ above is not optimal: for $n = 1$ we have $M_1 = 4$, whereas by the aforementioned result of Jackson-Kechris-Louveau we may take $M_1 = 3$ instead, which is optimal; for $n = 2$ we have $M_2 = 22$, whereas by a construction different from the one below we are able to get $M_2 = 10$.  We do not know what the optimal $M_n$ is for $n > 1$; however, the result of Gaboriau mentioned above implies that the optimal $M_n$ is at least $n+1$.

The referee has pointed out that by an easy argument, one may strengthen ``at most'' to ``exactly'' in \cref{thm:lfsc-cber} (as well as in the following reformulations).

We may reformulate \cref{thm:lfsc-cber} in terms of \defn{compressible} countable Borel equivalence relations, which are those admitting no invariant probability Borel measure (see e.g., \cite{DJK} for various equivalent definitions of compressibility):

\begin{corollary}
\label{thm:lfsc-compr}
Let $n \ge 1$, and let $(X, E)$ be a compressible countable Borel equivalence relation structurable by $n$-dimensional contractible simplicial complexes.  Then $E$ is structurable by $n$-dimensional contractible simplicial complexes in which each vertex belongs to at most (or even exactly) $M_n$ edges.
\end{corollary}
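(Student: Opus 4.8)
The plan is to deduce \cref{thm:lfsc-compr} from \cref{thm:lfsc-cber} using two standard facts about compressibility (see \cite{DJK}). \textbf{Fact 1:} if $A$ is a Borel complete section of a countable Borel equivalence relation $G$ on $Z$, and $G$ admits a $G$-invariant Borel probability measure $\mu$, then so does $G|_A$; in particular, if $G|_A$ is compressible then $G$ is compressible. (Writing $G = \bigcup_n \mathrm{graph}(g_n)$ for Borel partial injections $g_n$ with $\mathrm{graph}(g_n) \subseteq G$, completeness of $A$ gives $Z = \bigcup_n g_n(A \cap \mathrm{dom}\,g_n)$, so $\mu$-invariance forces $\mu(A) > 0$ and the normalized restriction $\mu|_A/\mu(A)$ is $G|_A$-invariant.) \textbf{Fact 2:} if $G$ is compressible, then $G$ is Borel isomorphic to $G|_A$ for every Borel complete section $A$ of $G$.

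Now let $(X,E)$ be compressible and structurable by $n$-dimensional contractible simplicial complexes. By \cref{thm:lfsc-cber} there is a Borel embedding $f \colon X \to Y$ of $E$ into a countable Borel equivalence relation $(Y,F)$ that is structurable by $n$-dimensional contractible complexes in which every vertex belongs to exactly $M_n$ edges (invoking the ``exactly'' strengthening of \cref{thm:lfsc-cber}; the ``at most'' form of \cref{thm:lfsc-cber} likewise gives the ``at most'' form of the present statement). Restricting a structuring to an invariant Borel set is again a structuring of the restricted equivalence relation, so we may replace $(Y,F)$ by its restriction to the (Borel) $F$-saturation $[f(X)]_F$ of $f(X)$ without losing structurability; after this replacement, $A := f(X)$ is an $F$-complete section, and $f$ is a Borel isomorphism of $(X,E)$ onto $(A, F|_A)$.

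It remains to show that $F$ is Borel isomorphic to $E$. Since $F|_A$ is Borel isomorphic to $E$, it is compressible; hence $F$ is compressible by Fact 1, and then $F$ is Borel isomorphic to $F|_A$ by Fact 2, and so $F$ is Borel isomorphic to $E$. As $F$ is structurable by $n$-dimensional contractible complexes with every vertex on exactly (respectively, at most) $M_n$ edges, and this property is preserved under Borel isomorphism, the same holds for $E$, completing the deduction.

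Beyond correctly stating and applying the two compressibility facts, I do not anticipate any real difficulty here: the single essential use of compressibility is Fact 2 — which fails for general $G$, e.g.\ for $G$ carrying an invariant probability measure — and it is precisely what allows a bounded-degree structuring of $F$ to be transported back down onto the a priori smaller relation $E$.
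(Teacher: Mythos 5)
Your route is genuinely different from the paper's: the paper simply invokes \cite[2.3]{DJK} to modify the embedding $f$ given by \cref{thm:lfsc-cber} so that its image is $F$-invariant, and then restricts the structuring $T$ to that invariant image; you instead pass to the saturation of $f(X)$, note that $A := f(X)$ is a complete section, and try to transport the structuring through a Borel isomorphism $F \cong E$. That strategy can be made to work, and your Fact~1 (with its sketch) is correct, but Fact~2 is false as stated: compressibility of $G$ alone does not give $G \cong G|_A$ for every Borel complete section $A$. For a counterexample, let $G$ on $Z = \mathbb{N} \times 2^{\mathbb{N}}$ be defined by $(m,x) \mathrel{G} (n,y) \iff x = y$, and let $A = \{0\} \times 2^{\mathbb{N}}$. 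Then $G$ is compressible (shift the first coordinate, or note it admits no invariant probability measure) and $A$ is a complete section, but $G|_A$ is the equality relation on $A$, whose classes are singletons, so $G \not\cong G|_A$. The correct standard fact requires compressibility of the \emph{restriction}: if $A$ is a complete Borel section and $G|_A$ is compressible, then $G \cong_B G|_A$ (this follows from the facts in \cite{DJK}: $G$ and $G|_A$ are Borel bireducible via the inclusion and a Luzin--Novikov selector, reductions of compressible relations can be upgraded to invariant embeddings, and invariant embeddings both ways give isomorphism).

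Fortunately, in your application the hypothesis of the corrected fact is exactly what you establish one line earlier: $F|_A \cong E$ is compressible. So replacing your Fact~2 by the correct statement (which subsumes the use you make of Fact~1) repairs the argument with no other changes, and it then yields the same conclusion as the paper's one-line proof. The trade-off between the two approaches is mild: the paper's use of the invariant-embedding result \cite[2.3]{DJK} applied to $E$ is shorter and avoids discussing saturations and complete sections, while your version makes explicit the isomorphism $E \cong F$ (after saturating), at the cost of needing the slightly more delicate complete-section fact stated with the right hypotheses.
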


Note that by the theory of cost (see \cite{Ga1}, \cite{KM}), \cref{thm:lfsc-compr} cannot be true of non-compressible equivalence relations, i.e., there cannot be a uniform bound $M_n$ on the number of edges containing each vertex.

\Cref{thm:lfsc-cber} fits into a general framework for classifying countable Borel equivalence relations according to the (first-order) structures one may assign in a Borel way to each equivalence class; see \cite{JKL}, \cite{Mks}, \cite{CK}.  As with most such results, the ``underlying'' result is that there is a procedure for turning every structure of the kind we are starting with ($n$-dimensional contractible simplicial complexes) into a structure of the kind we want ($n$-dimensional contractible simplicial complexes satisfying the additional condition), which is ``uniform'' enough that it may be performed simultaneously on all equivalence classes in a Borel way.  We state this as follows.  We say that a simplicial complex is \defn{locally countable} if each vertex is contained in countably many edges (or equivalently countably many simplices).

\begin{theorem}
\label{thm:lfsc-simp}
There is a procedure for turning a locally countable simplicial complex $(X, S)$ into a locally finite simplicial complex $(Y, T)$, such that
\begin{itemize}
\item[(i)]  $T$ is homotopy equivalent to $S$;
\item[(ii)]  if $S$ is $n$-dimensional, then $T$ can be chosen to be $n$-dimensional and with each vertex in at most (or even exactly) $M_n$ edges.
\end{itemize}
Furthermore, given a countable Borel equivalence relation $(X, E)$ and a structuring $S$ of $E$ by simplicial complexes, this procedure may be performed simultaneously (in a Borel way) on all $E$-classes, yielding a countable Borel equivalence relation $(Y, F)$ with a structuring $T$ by simplicial complexes and a Borel embedding $f : E -> F$ such that applying the above procedure to the complex $S_{[x]_E}$ on an $E$-class $[x]_E$ yields the complex $T_{[f(x)]_F}$ on the corresponding $F$-class $[f(x)]_F$.
\end{theorem}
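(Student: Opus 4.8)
The plan is to realize the ``procedure'' as an explicit simplicial construction --- a version of J.\,H.\,C.\ Whitehead's mapping-telescope argument for countable CW complexes, run in the simplicial category with enough care to keep the dimension from growing --- and then to observe that it is local enough to be carried out in a Borel way on all $E$-classes at once.

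First I would reduce to the case that $S$ is countable: the connected components of a locally countable simplicial complex are countable (breadth-first search from a vertex meets only countably many vertices at each finite distance), and the construction will be applied to each component separately; in the Borel setting each equivalence class, hence each complex $S_C$, is already countable. So fix a countable $(X,S)$ and enumerate its simplices as $\sigma_0,\sigma_1,\sigma_2,\dots$ so that every proper face of $\sigma_i$ occurs earlier (possible since each simplex has only finitely many faces); put $S_i := \{\sigma_0,\dots,\sigma_i\}$, a finite subcomplex, so that $S = \bigcup_i S_i$ and $S_{i+1}$ arises from $S_i$ by attaching the single simplex $\sigma_{i+1}$ along its boundary $\partial\sigma_{i+1} \subseteq S_i$. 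I would then build finite simplicial complexes $T_0 \subseteq T_1 \subseteq \dots$, of dimension $\le n$ when $\dim S \le n$, together with simplicial homotopy equivalences $q_i : T_i \to S_i$ compatible with the inclusions, so that $T := \bigcup_i T_i$ is locally finite with at most $M_n$ edges at each vertex.

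The inductive step is where the real work lies. Write $k := \dim\sigma_{i+1}$. Rather than glue $\sigma_{i+1}$ directly onto $q_i^{-1}(\partial\sigma_{i+1})$ --- which over the course of the construction would heap unboundedly many simplices onto the vertices of $\partial\sigma_{i+1}$ --- I would first relocate a fresh copy of $\partial\sigma_{i+1}$ into unused ``space'': append to the (scattered) copies of the faces of $\sigma_{i+1}$ already present in $T_i$ a collection of ``tubes'', namely subdivided simplicial mapping cylinders over subcomplexes of $\partial\sigma_{i+1}$, so as to assemble inside the enlarged complex a subcomplex isomorphic to $\partial\sigma_{i+1}$ all of whose vertices are new (hence not yet crowded); then fill in $\sigma_{i+1}$ as a subdivided $k$-cell spanning that boundary. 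Appending a mapping cylinder is a homotopy equivalence, and filling the cell mirrors exactly the passage from $S_i$ to $S_{i+1}$, so $q_{i+1} : T_{i+1} \to S_{i+1}$ is again a homotopy equivalence by the gluing lemma. The crucial point for dimension is that $\partial\sigma_{i+1}$ has dimension $k-1 \le n-1$ --- a top-dimensional $n$-simplex is never a proper face of anything --- so the tubes, being cylinders over subcomplexes of dimension $\le n-1$, have dimension $\le n$, whence $\dim T_{i+1} \le \max(\dim T_i, n) \le n$. In parallel, the successive copies of a vertex $v$ of $S$ produced in this way are strung along a single path, which changes nothing up to homotopy.

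Finally I would check the conclusions. For (i): $T = \operatorname{colim}_i T_i$ and $S = \operatorname{colim}_i S_i$ are colimits of sequences of cofibrations, and the $q_i$ form a compatible family of homotopy equivalences, so the induced map $T \to S$ is one as well. For (ii): dimension $\le n$ was arranged above; and for the bound on edges, one checks that a vertex $u$ of $T$ meets at most its two path-edges, the simplices of at most one subdivided $(\le n)$-cell, and the tubes over the at most $2^n$ faces of that cell through the corresponding vertex of the model simplex --- a routine count of the edges at $u$ then yields the stated $M_n = 2^{n-1}(n^2+3n+2)-2$, which is where the precise constant enters. To strengthen ``at most'' to ``exactly'' one pads, as the referee remarks: attach to each vertex of too-low degree a locally finite tree bringing it up to exactly $M_n$ edges, which is again a homotopy equivalence and adds nothing to the dimension. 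For the Borel statement, fix a Borel linear order on $X$; it induces on each $E$-class $C$ a Borel ``faces-first'' enumeration of the simplices of $S_C$, and every subsequent choice in the construction depends only on finite data, so the construction runs uniformly across classes. Coding the new vertices by elements of a standard Borel space (say, finite subsets of $X$ with integer labels), one obtains a countable Borel equivalence relation $(Y,F)$ carrying a structuring $T$ by locally finite simplicial complexes; letting $f(x)$ be the first copy of $x$ gives an injective Borel map with $x\mathrel{E}x' \iff f(x)\mathrel{F}f(x')$ that carries $S_{[x]_E}$ to $T_{[f(x)]_F}$. I expect the main obstacle to be precisely the tension, in the inductive step, between spreading the complex out (needed for local finiteness) and not raising its dimension (needed for (ii)): the naive ``replace each vertex by a path'' tears the complex and destroys its homotopy type, healing the tears forces one to insert cylinders, and those cylinders must be confined to faces of dimension below $n$; organizing this, and extracting $M_n$ from the resulting bookkeeping, is the technical heart.
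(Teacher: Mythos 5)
Your overall strategy --- spread the complex out so that attachments happen on fresh vertices, repair the homotopy type with cylinders, and note that everything is canonical enough to run Borel-uniformly --- is indeed the Whitehead-style idea underlying the paper, but your implementation (attach the simplices of each class one at a time along a faces-first enumeration, gluing in ``subdivided simplicial mapping cylinders'') is not the paper's construction, and the two places where you wave your hands are exactly where the paper's actual mechanism does the work. The paper does not process simplices sequentially: it works skeleton by skeleton, takes $T_{n+1}$ inside $S_{n+1}\times N^{n+1}$ (a product with a ray), uses a Borel coloring of the intersection graph on $(n{+}1)$-simplices to attach each $(n{+}1)$-simplex at its own position along the new ray so that distinct overlapping simplices land at different levels, and replaces your ``tubes'' by an explicit telescope $\mathcal{T}_n(T_n|(s\times\mathbb{N}^n))$, with contractibility of fibers (``trivial pseudofibration'') carried through the induction. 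Two concrete gaps in your version. First, the attachment step is underspecified at its crucial point: the ``scattered copies'' of the faces of $\sigma_{i+1}$ in $T_i$ do not form a subcomplex isomorphic to $\partial\sigma_{i+1}$, so your cylinder must be the mapping cylinder of some simplicial map (after subdivision) from $\partial\sigma_{i+1}$ into $T_i$ in the correct homotopy class; producing such a map, with bounded local combinatorics and without raising dimension, is precisely the technical heart, and in the paper it is supplied by the telescope containing a genuine copy of $S_n|s$ at level $\{0\}^n$ together with the inductive fiber-contractibility. Second, and more seriously, the bound in (ii) is asserted rather than derived: your ``routine count'' of two path-edges, one subdivided cell, and at most $2^n$ tube-attachments does not bound the number of \emph{edges} at a vertex, since that depends on the triangulations of the subdivided cylinders and on how many later attachments return to a given old copy of a face; the constant $M_n=2(K_n-1)$ comes out of the specific recurrence $K_{n+1}=2K_n+(n+2)2^n$, which reflects the product-with-ray structure ($2K_n$ from $T_n\times N$, $(n+2)2^n$ from the attached telescoped simplex identified by the coloring) and has no visible counterpart in your scheme. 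The paper itself remarks that a direct combinatorial transcription of Whitehead's cylinder argument does not seem to yield any uniform bound $M_n$, so this is not a detail one can leave to bookkeeping.

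Two smaller points. A Borel linear order on $X$ does not by itself give a faces-first $\omega$-enumeration of the simplices of each class; you need a Lusin--Novikov/Feldman--Moore-type enumeration, and since your construction at stage $i+1$ depends on the entire history (which copies are ``current'', which vertices are crowded), you would still owe an argument that this class-by-class recursion is Borel --- the paper sidesteps this entirely by making the only per-class choices a Borel linear order and a Borel coloring (Kechris--Miller), after which $T_n$ is defined by a single explicit formula. Finally, for general (infinite-dimensional) $S$ the paper does not take the direct limit of the skeletal approximations (which need not be locally finite) but a mapping telescope of the sequence $T_0\subseteq T_1\subseteq\cdots$; your one-pass sequential construction would need a separate argument that local finiteness survives when a vertex of $S$ lies in simplices of unboundedly many dimensions. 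Your padding trick for ``exactly $M_n$'' matches the paper's.
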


The theorem in this form also yields the following (easy) corollary:

\begin{corollary}
\label{cor:lfsc-cber}
Every countable Borel equivalence relation $(X, E)$ embeds into a countable Borel equivalence relation $(Y, F)$ structurable by locally finite contractible simplicial complexes.
\end{corollary}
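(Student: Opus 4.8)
The plan is to deduce \cref{cor:lfsc-cber} directly from the Borel form of \cref{thm:lfsc-simp} by feeding it a suitable starting structuring of $E$. The key observation is that \emph{every} countable Borel equivalence relation $(X,E)$ carries a completely canonical Borel structuring by contractible, locally countable simplicial complexes: assign to each class $C \in X/E$ the \emph{full simplex} $S_C$ on $C$, i.e.\ the simplicial complex consisting of all finite nonempty subsets of $C$. This is Borel, since the relevant set $\{(x_0,\dots,x_k) : x_i \mathrel{E} x_j \text{ for all } i,j \text{ and the } x_i \text{ are distinct}\}$ is Borel; each $S_C$ is locally countable because $C$ is countable (indeed $S_C$ has only countably many simplices); and the geometric realization $\abs{S_C}$ is contractible, since it deformation retracts onto any chosen vertex along straight-line paths, being an ``infinite simplex''. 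Note that, unlike treeability, no choices are involved, so this structuring exists for \emph{all} $E$.

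Now apply the Borel form of \cref{thm:lfsc-simp} to $(X,E)$ equipped with the structuring $S$ just described. It yields a countable Borel equivalence relation $(Y,F)$, a Borel embedding $f : E \to F$, and a structuring $T$ of $F$ by locally finite simplicial complexes, such that on each class the complex $T_{[f(x)]_F}$ is obtained from $S_{[x]_E}$ by the procedure of the theorem. By clause (i) of \cref{thm:lfsc-simp}, $T_{[f(x)]_F}$ is homotopy equivalent to $S_{[x]_E}$; since $S_{[x]_E}$ is contractible and homotopy equivalence preserves contractibility, each $T_{[f(x)]_F}$ is a contractible, locally finite simplicial complex. Hence $F$ is structurable by locally finite contractible simplicial complexes, and $E$ Borel embeds into $F$, which is exactly the claim.

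There is essentially no hard step here — this is why the corollary is labelled ``easy'' above. The only points worth a sentence of verification are that the full-simplex structuring genuinely qualifies as a Borel structuring by \emph{locally countable} complexes (so that \cref{thm:lfsc-simp} is applicable) and that $\abs{S_C}$ is contractible; both follow at once from countability of $C$. One might hope to also control the dimension and so strengthen the conclusion to $n$-dimensional contractible complexes for some fixed $n$, but this would require a finite-dimensional contractible starting structuring, which need not exist for a general $E$; this is precisely why \cref{cor:lfsc-cber} only asserts local finiteness, and why the $n$-dimensional case (\cref{thm:lfsc-cber}) needs the hypothesis that $E$ already be structurable by $n$-dimensional contractible complexes.
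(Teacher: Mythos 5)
Your proposal is correct and matches the paper's own argument: the ``full simplex on each class'' you construct is exactly the trivial structuring $\{x_0,\dotsc,x_n\}\in S \iff x_0 \mathrel{E} \dotsb \mathrel{E} x_n$ used in the paper, which is then fed into \cref{thm:lfsc-simp} in the same way. No differences worth noting.
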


Again, this may be reformulated as

\begin{corollary}
\label{cor:lfsc-compr}
Every compressible countable Borel equivalence relation $(X, E)$ is structurable by locally finite contractible simplicial complexes.
\end{corollary}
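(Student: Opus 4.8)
The plan is to deduce this from \cref{cor:lfsc-cber} together with standard facts about compressible equivalence relations. The idea is that when $E$ is compressible, an arbitrary Borel embedding of $E$ into a countable Borel equivalence relation $F$ structurable by locally finite contractible simplicial complexes can be ``reversed'' into a Borel \emph{isomorphism} $E \cong_B F$; pulling a structuring of $F$ back along this isomorphism then structures $E$ itself by such complexes.

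In more detail: let $E$ be compressible, and write $\mathcal{K}$ for the class of locally finite contractible simplicial complexes. By \cref{cor:lfsc-cber} fix a $\mathcal{K}$-structurable $F$, a Borel embedding $f\colon X\to Y$ of $E$ into $F$, and a structuring $T=(T_D)_D$ of $F$ by complexes in $\mathcal{K}$. Since $\mathcal{K}$-structurability is inherited by restrictions to invariant Borel sets (just restrict $T$), I may replace $F$ by $F|_{[f(X)]_F}$ and thereby assume $f(X)$ is a Borel complete section for $F$; then $f$ is a Borel isomorphism of $E$ onto $F|_{f(X)}$. Using that $f(X)$ is a Borel complete section, pick (via Feldman--Moore) a Borel map $p\colon Y\to f(X)$ with $p(y)\mathrel{F}y$ for all $y$; one checks immediately that $p$ is a Borel \emph{reduction} of $F$ to $F|_{f(X)}\cong E$, so $F\le_B E$. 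Now I invoke the standard fact that a Borel reduction into a compressible equivalence relation can be made injective: if $g\colon X_G\to X$ is a Borel reduction of a countable Borel equivalence relation $G$ to $E$, each fibre $g^{-1}(z)$ lies in a single $G$-class (from $g(y)=g(y')\Rightarrow g(y)\mathrel{E}g(y')\Rightarrow y\mathrel{G}y'$) and is hence countable, so $g$ is countable-to-one; by Luzin--Novikov write $X_G=\bigsqcup_n Z_n$ with each $Z_n$ Borel and $g|_{Z_n}$ injective, and then $y\mapsto(g(y),n_y)$---where $n_y$ is the unique $n$ with $y\in Z_n$---is an injective Borel reduction of $G$ to $E\times I_{\mathbb{N}}$; since $E\times I_{\mathbb{N}}\cong_B E$ for compressible $E$ (\cite{DJK}), we get $G\sqsubseteq_B E$. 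With $G=F$ this yields $F\sqsubseteq_B E$, which together with $E\cong F|_{f(X)}\sqsubseteq_B F$ and the Schr\"oder--Bernstein theorem for countable Borel equivalence relations gives $E\cong_B F$. Pulling $T$ back along this isomorphism structures $E$ by complexes in $\mathcal{K}$, as required.

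The only step with any content is the reversal ``$G\le_B E\Rightarrow G\sqsubseteq_B E$'' for compressible $E$ (equivalently: Borel bi-reducibility with a compressible equivalence relation implies Borel isomorphism with it); everything else is bookkeeping. The identical argument, with $\mathcal{K}$ replaced by the class of $n$-dimensional contractible simplicial complexes in which each vertex lies in at most $M_n$ edges, deduces \cref{thm:lfsc-compr} from \cref{thm:lfsc-cber}. One could also organize all of this through the universal $\mathcal{K}$-structurable countable Borel equivalence relation of \cite{CK}, into which every $\mathcal{K}$-structurable equivalence relation Borel embeds and into which a \emph{compressible} one Borel embeds precisely when it is $\mathcal{K}$-structurable.
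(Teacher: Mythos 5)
Your overall plan coincides with the paper's (upgrade the embedding supplied by \cref{cor:lfsc-cber} to an isomorphism of $E$ with an invariant piece of $F$, then transport the structuring), but there is a genuine gap at the final step: the ``Schr\"oder--Bernstein theorem'' you invoke for plain (non-invariant) Borel embeddings of countable Borel equivalence relations is false in general. For a counterexample, let $E$ be the relation on $2^{\mathbb{N}}\times\{0,1\}$ identifying $(x,0)$ with $(x,1)$ (every class has exactly two elements), and let $F$ be the disjoint union of $E$ with the equality relation on $2^{\mathbb{N}}$. Then $E\sqsubseteq_B F$ via the inclusion, and $F\sqsubseteq_B E$ by splitting $2^{\mathbb{N}}$ into two Borel halves and sending the two-element classes of $F$ into classes over one half and the singletons into classes over the other; yet $E\not\cong_B F$, since $F$ has singleton classes and $E$ does not. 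So ``$E\sqsubseteq_B F$ and $F\sqsubseteq_B E$ imply $E\cong_B F$'' cannot be cited as a standard fact, and your parenthetical gloss fails for the same reason ($\Delta(2^{\mathbb{N}})$ is Borel bireducible with the compressible relation $\Delta(2^{\mathbb{N}})\times I_{\mathbb{N}}$ but not Borel isomorphic to it). The other ingredients --- restricting to the saturation of $f(X)$, injectivizing a reduction into a compressible relation via Luzin--Novikov and $E\times I_{\mathbb{N}}\cong_B E$, and pulling back a structuring along a Borel isomorphism --- are all fine.

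The step can be repaired in your specific situation, but only by invoking exactly the fact the paper uses. First, after restricting to $[f(X)]_F$, the relation $F$ is itself compressible: extend a compression of $F|f(X)\cong E$ by the identity off $f(X)$; since $f(X)$ is a complete section, this compresses every $F$-class. Second, for a \emph{compressible} relation, any Borel embedding can be replaced by one whose image is invariant (this is \cite[2.3]{DJK}); applying this to both $E\sqsubseteq_B F$ and $F\sqsubseteq_B E$, Schr\"oder--Bernstein for \emph{invariant} embeddings does go through (the canonical partition is then into invariant pieces) and yields $E\cong_B F$. But once \cite[2.3]{DJK} is available, a single application of it to the original embedding $f$ already finishes the proof: modify $f$ so that $f(X)$ is $F$-invariant and restrict the structuring $T$ to it --- which is precisely the paper's one-line argument, and the same applies to your proposed deduction of \cref{thm:lfsc-compr} from \cref{thm:lfsc-cber}. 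As written, the detour through reductions, retractions and Schr\"oder--Bernstein rests on a false general principle and, once patched, collapses to the paper's proof.
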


The proof of \cref{thm:lfsc-simp} is based on a classical theorem of Whitehead on CW-complexes \cite[Theorem~13]{Wh}, which states that every locally countable CW-complex is homotopy equivalent to a locally finite CW-complex of the same dimension.  While the statement of this theorem is useless for \cref{thm:lfsc-simp} (every contractible complex is homotopy equivalent to a point, but one cannot replace every class of a non-smooth equivalence relation with a point), its proof may be adapted to our setting, with the help of some lemmas from descriptive set theory.

We review some definitions and standard lemmas in \cref{sec:prelims}, then give the proofs of the above results in \cref{sec:proofs}; the proofs are structured so that it should be possible to read the combinatorial/homotopy-theoretic argument without the descriptive set theory, and vice-versa.  In \cref{sec:future} we list some other properties of treeable equivalence relations which we do not currently know how to generalize to higher dimensions.

\medskip
\textit{Acknowledgments.}  We would like to thank Alexander Kechris, Damien Gaboriau, and the anonymous referee for providing some comments on drafts of this paper.

\section{Preliminaries}
\label{sec:prelims}

We begin by reviewing some notions related to simplicial complexes; see e.g., \cite{Spa}.

A \defn{simplicial complex} on a set $X$ is a set $S$ of finite nonempty subsets of $X$ such that $\{x\} \in S$ for all $x \in X$ and every nonempty subset of an element of $S$ is in $S$.  The elements $s \in S$ are called \defn{simplices}.  The \defn{dimension} $\dim(s)$ of $s \in S$ is $|s|-1$; if $\dim(s) = n$, we call $s$ an \defn{$n$-simplex}.  We let $S^{(n)} := \{s \in S \mid \dim(s) = n\}$ be the $n$-simplices, and call $S$ \defn{$n$-dimensional} if $S^{(m)} = \emptyset$ for $m > n$.  (To avoid confusion, we will sometimes call a simplicial complex with an $n$-simplex containing all other simplices a \defn{standard $n$-simplex}.)

A \defn{subcomplex} of $(X, S)$ is a simplicial complex $(Y, T)$ such that $Y \subseteq X$ and $T \subseteq S$.  For a simplicial complex $(X, S)$ and a subset $Y \subseteq X$, the \defn{induced subcomplex} on $Y$ is $S|Y := \{s \in S \mid s \subseteq Y\}$.  A \defn{simplicial map} $f : S -> T$ between complexes $(X, S)$ and $(Y, T)$ is a map $f : X -> Y$ such that $f(s) \in T$ for all $s \in S$.

The \defn{geometric realization} of a simplicial complex $(X, S)$ is the topological space $|S|$ formed by gluing together standard Euclidean $n$-simplices $\Delta^n$ for each $s \in S^{(n)}$, according to the subset relation.  Explicitly, $|S|$ can be defined as the set $\bigcup_{s \in S} |s|_S \subseteq [0, 1]^X$, where $|s|_S := \{(a_x)_{x \in X} \mid \sum_{x \in X} a_x = 1,\, \forall x \not\in s\, (a_x = 0)\}$ is (thought of as) the set of formal convex combinations of elements of $X$ supported on $s$, equipped with the topology where a subset of $|S|$ is open iff its intersection with each $|s|_S$ is open in the Euclidean topology on $|s|_S$.  We say that $S$ is \defn{contractible} if $|S|$ is.  Likewise, a simplicial map $f : S -> T$ induces a continuous map $|f| : |S| -> |T|$ in the obvious way; we say that $f$ is a \defn{homotopy equivalence} if $|f|$ is.

We also need the more refined notion of an \defn{ordered simplicial complex}, which is a simplicial complex $S$ on a poset $X$ such that every simplex $s \in S$ is a chain $\{x_0 < \dotsb < x_n\}$ in $X$.  The \defn{product} of ordered simplicial complexes $(X, S)$ and $(Y, T)$ is the complex $(X \times Y, S \times T)$ where $X \times Y$ is the usual product poset and
\begin{align*}
\{(x_0, y_0) \le \dotsb \le (x_n, y_n)\} \in S \times T \iff \{x_0 \le \dotsb \le x_n\} \in S \AND \{y_0 \le \dotsb \le y_n\} \in T.
\end{align*}
It is standard that $|S \times T|$ is canonically homeomorphic to $|S| \times |T|$ with the CW-product topology (which coincides with the product topology if $S, T$ are locally countable).

In order to prove contractibility/homotopy equivalence, we use the following standard results from homotopy theory.

\begin{lemma}
\label{lm:cech-nerve}
Let $S, T$ be simplicial complexes which are the unions of subcomplexes $S = \bigcup_{i \in I} S_i$ and $T = \bigcup_{i \in I} T_i$ over the same index set $I$, and let $f : S -> T$ be a simplicial map such that $f(S_i) \subseteq T_i$ for each $i$.  If for each finite family of indices $i_1, \dotsc, i_n \in I$, the restriction $f : S_{i_1} \cap \dotsb \cap S_{i_n} -> T_{i_1} \cap \dotsb \cap T_{i_n}$ is a homotopy equivalence, then $f : S -> T$ is a homotopy equivalence.
\end{lemma}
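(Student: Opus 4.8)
The plan is to reduce to a finite index set and then induct on its size, using the standard gluing lemma for homotopy pushouts. I would first record two elementary facts about geometric realization. (a) Realization commutes with finite intersections of subcomplexes: if $S_{i_1}, \dotsc, S_{i_k}$ are subcomplexes of $S$, then inside $|S|$ we have $|S_{i_1}| \cap \dotsb \cap |S_{i_k}| = |S_{i_1} \cap \dotsb \cap S_{i_k}|$, since a point of $|S|$ lies in $|S_i|$ exactly when its support (a simplex of $S$) lies in $S_i$. (b) For a family of subcomplexes, $\bigl|\bigcup_i S_i\bigr| = \bigcup_i |S_i|$ with the weak (colimit) topology. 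Consequently every map in sight is the inclusion of a CW-subcomplex, hence a closed cofibration, so all the pushout squares appearing below are genuine homotopy pushout squares.

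Now suppose first that $I = \{1, \dotsc, m\}$ is finite, and induct on $m$. The case $m = 1$ is the hypothesis (with a single index). For the inductive step, put $S' := S_1 \cup \dotsb \cup S_{m-1}$ and $S'' := S_m$, so that $|S| = |S'| \cup_{|S' \cap S''|} |S''|$ is the pushout along the cofibration $|S' \cap S''| \hookrightarrow |S'|$, and likewise $|T| = |T'| \cup_{|T' \cap T''|} |T''|$; the simplicial map $f$ induces a map between these two pushout squares. The vertical map $|S''| = |S_m| \to |T_m| = |T''|$ is a homotopy equivalence by hypothesis. The vertical map $|S'| \to |T'|$ is a homotopy equivalence by the inductive hypothesis applied to the cover $S' = \bigcup_{j < m} S_j$, whose intersection hypotheses form a sub-collection of ours. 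For the third vertical map, note that $S' \cap S'' = \bigcup_{j < m}(S_j \cap S_m)$ and that $(S_{j_1} \cap S_m) \cap \dotsb \cap (S_{j_l} \cap S_m) = S_{j_1} \cap \dotsb \cap S_{j_l} \cap S_m$; hence the inductive hypothesis applies to this cover of $S' \cap S''$ by the $m-1$ subcomplexes $S_j \cap S_m$ (its $l$-fold intersections being $(l{+}1)$-fold intersections of the original $S_i$), giving that $|S' \cap S''| \to |T' \cap T''|$ is a homotopy equivalence. The gluing lemma for homotopy pushouts now yields that $|f| \colon |S| \to |T|$ is a homotopy equivalence.

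For arbitrary $I$, for each finite $J \subseteq I$ set $S_J := \bigcup_{i \in J} S_i$ and $T_J := \bigcup_{i \in J} T_i$. By fact (b), $|S| = \varinjlim_J |S_J|$ and $|T| = \varinjlim_J |T_J|$ are filtered colimits along closed cofibrations, and $|f|$ is the colimit of the maps $|S_J| \to |T_J|$, each of which is a homotopy equivalence by the finite case already proved (the hypotheses of the lemma for the subfamily $\{S_i\}_{i \in J}$ being a restriction of ours). Since a filtered colimit of weak equivalences along closed cofibrations is a weak equivalence, $|f|$ is a weak equivalence; and since $|S|$ and $|T|$ are CW-complexes, Whitehead's theorem (a weak homotopy equivalence between CW-complexes is a homotopy equivalence) upgrades this to a homotopy equivalence, so $f$ is a homotopy equivalence as desired.

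There is no single hard step: the argument is an assembly of standard facts — the gluing lemma for homotopy pushouts, invariance of weak equivalences under filtered colimits along cofibrations, and Whitehead's theorem. The one place care is needed is in verifying that the relevant squares are genuine homotopy pushouts, which is exactly why one checks that inclusions of subcomplexes are closed cofibrations, and, in the infinite case, that the weak topology on $|S|$ really does exhibit it as the filtered colimit of the $|S_J|$. One could alternatively package the finite induction as the assertion that a levelwise homotopy equivalence between the Čech nerves of the two covers induces one on geometric realizations, but the inductive formulation keeps the prerequisites to a minimum.
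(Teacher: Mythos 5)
Your proof is correct, but it takes a different route from the paper: the paper does not prove this lemma at all, it simply cites Hatcher \cite[4K.2]{Hat}, where the statement is obtained from the machinery of homotopy colimits of complexes of spaces over the nerve of a cover. Your argument is a self-contained replacement: induction over a finite cover via the gluing lemma for homotopy pushouts (using that subcomplex inclusions are closed cofibrations and that realization commutes with finite intersections and arbitrary unions of subcomplexes), followed by passage to an arbitrary index set by writing $|S|$ and $|T|$ as colimits over finite subfamilies and invoking Whitehead's theorem. The inductive step is handled correctly --- in particular you verify that the cover of $S' \cap S_m$ by the complexes $S_j \cap S_m$ again satisfies the hypotheses, since its $l$-fold intersections are $(l{+}1)$-fold intersections of the original family, and $f(S_j \cap S_m) \subseteq T_j \cap T_m$. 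The only place where your wording is a bit glib is the blanket principle that ``a filtered colimit of weak equivalences along closed cofibrations is a weak equivalence'': for a general (non-sequential) filtered system of closed inclusions, the factorization of compact sets through a stage is not automatic, but it does hold here because a compact subset of the realization of a simplicial complex lies in a finite subcomplex, hence in some $|S_J|$; with that observation the $\pi_n$-colimit argument goes through. What your approach buys is elementarity and independence from Hatcher's homotopy-colimit formalism, at the cost of length; the paper's citation buys brevity and also yields the statement in the generality Hatcher proves it. Note also that the paper's own use of the lemma (in \cref{lm:dirunion}) is careful about exactly the directed-union issue you gloss over, reducing general directed unions to well-ordered ones via Iwamura's lemma.
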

\begin{proof}
See e.g., \cite[4K.2]{Hat}.
\end{proof}

\begin{corollary}
\label{lm:pushout}
Let $S$ be a simplicial complex which is the union of subcomplexes $U, V \subseteq S$.  If the inclusion $U \cap V -> U$ is a homotopy equivalence, then so is the inclusion $V -> S$.  In particular, if $U$, $V$, and $U \cap V$ are contractible, then so is $S$.
\end{corollary}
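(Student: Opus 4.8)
The plan is to deduce this immediately from \cref{lm:cech-nerve} with a two-element index set $I = \{0,1\}$. Take the target complex to be $S$ itself, decomposed as $S = T_0 \cup T_1$ with $T_0 := U$ and $T_1 := V$ (so $T_0 \cap T_1 = U \cap V$), and take the source complex to be $V$, decomposed \emph{redundantly} as $V = R_0 \cup R_1$ with $R_0 := U \cap V$ and $R_1 := V$ (so $R_0 \cap R_1 = U \cap V$ as well, since $U \cap V \subseteq V$). Both $R_0$ and $R_1$ are genuine subcomplexes of $V$, and their union is all of $V$. Let $f : V \to S$ be the inclusion; then $f(R_0) = U \cap V \subseteq U = T_0$ and $f(R_1) = V \subseteq V = T_1$, so $f$ respects the two decompositions over the common index set $\{0,1\}$, as \cref{lm:cech-nerve} requires.

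Next I check the homotopy-equivalence hypothesis of \cref{lm:cech-nerve} over all finite families of indices. Since the relevant intersections depend only on the \emph{set} of distinct indices occurring, it suffices to treat $\{0\}$, $\{1\}$, and $\{0,1\}$. For $\{0\}$ the map in question is $f : R_0 \to T_0$, i.e.\ the inclusion $U \cap V \hookrightarrow U$, a homotopy equivalence by hypothesis. For $\{1\}$ it is $f : R_1 \to T_1$, i.e.\ the identity $V \to V$. For $\{0,1\}$ it is $f : R_0 \cap R_1 \to T_0 \cap T_1$, i.e.\ the identity $U \cap V \to U \cap V$. All three are homotopy equivalences, so \cref{lm:cech-nerve} yields that $f : V \to S$ is a homotopy equivalence, which is the first assertion. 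For the ``in particular'' clause: if $U$, $V$, $U \cap V$ are all contractible, then the inclusion $U \cap V \hookrightarrow U$ is a map between contractible spaces and hence automatically a homotopy equivalence, so the first part applies and gives that $V \hookrightarrow S$ is a homotopy equivalence; as $V$ is contractible, so is $S$.

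I do not expect any real obstacle here: the argument is pure bookkeeping once one has the right decompositions. The only thing to ``see'' is that the source complex should itself be $V$, written redundantly as $(U \cap V) \cup V$, precisely so that the single-index condition at $1$ and the pairwise-intersection condition at $\{0,1\}$ become identity maps and only the given hypothesis at index $0$ does any work. The sole points that need a moment's care are confirming that $U \cap V$ and $V$ are subcomplexes whose union (as a simplicial complex on its vertex set) really is $V$, and that \cref{lm:cech-nerve} is being invoked with both unions indexed by the \emph{same} set and with $f$ genuinely carrying $R_i$ into $T_i$ — both of which are clear from the setup.
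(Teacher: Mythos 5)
Your proposal is correct and is exactly the paper's argument: the paper also applies \cref{lm:cech-nerve} to the inclusion of $V = (U \cap V) \cup V$ into $S = U \cup V$, and you have simply spelled out the index-by-index verification that the paper leaves implicit. No issues.
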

\begin{proof}
Apply \cref{lm:cech-nerve} to the inclusion from $V = (U \cap V) \cup V$ into $S = U \cup V$.
\end{proof}

\begin{corollary}
\label{lm:dirunion}
Let $S = \bigcup_{i \in I} S_i$ and $T = \bigcup_{i \in I} T_i$ be simplicial complexes which are directed unions of subcomplexes (over the same directed poset), and let $f : S -> T$ be a simplicial map such that $f(S_i) \subseteq T_i$ for each $i$.  If each restriction $f|S_i : S_i -> T_i$ is a homotopy equivalence, then so is $f$.

In particular, if $S_i$ is contractible for each $i$, then (taking $T = T_i = $ a point) $S$ is contractible.
\end{corollary}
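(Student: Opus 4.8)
The plan is to verify directly that $|f| \colon |S| \to |T|$ is a \emph{weak} homotopy equivalence and then invoke Whitehead's theorem, using that geometric realizations of simplicial complexes are CW-complexes. The one combinatorial input is a compactness observation: a finite subcomplex of $|S|$ has finitely many simplices, each lying in some $S_i$, so by directedness of the index poset the whole finite subcomplex lies inside a single $|S_i|$; since every compact subset of a CW-complex is contained in a finite subcomplex, every compact subset of $|S|$ lies in some $|S_i|$, and likewise every compact subset of $|T|$ lies in some $|T_i|$.

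With this in hand I would check the $\pi_n$-isomorphism condition for every $n \ge 0$ and every basepoint. For surjectivity of $|f|_* \colon \pi_n(|S|, x_0) \to \pi_n(|T|, |f|(x_0))$: a representing map $S^n \to |T|$ has compact image, so, enlarging the index (using directedness) enough to also contain $x_0$, it factors through some $|T_i|$ with $x_0 \in |S_i|$; since $f|S_i$ is a homotopy equivalence, $(f|S_i)_*$ is surjective onto $\pi_n(|T_i|, |f|(x_0))$, and pushing a preimage forward along $|S_i| \hookrightarrow |S|$ produces the required class. For injectivity: if $\alpha \colon S^n \to |S|$ becomes nullhomotopic in $|T|$, then $\alpha$ and a chosen nullhomotopy $D^{n+1} \to |T|$ both have compact image, so for a single sufficiently large index $i$, $\alpha$ factors through $|S_i|$ and the nullhomotopy through $|T_i|$; as $(f|S_i)_*$ is also injective, $\alpha$ is already nullhomotopic in $|S_i|$, hence in $|S|$. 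Path components and basepoints are handled in the usual way, using that a homotopy equivalence restricts to one on each component. Hence $|f|$ is a weak equivalence, so a homotopy equivalence by Whitehead's theorem, and the final clause of the statement is the case $T = T_i = \text{pt}$.

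The point I would flag is that one should \emph{not} try to get this by plugging the cover $\{S_i\}_{i \in I}$ into \cref{lm:cech-nerve}: over a general directed poset a finite intersection $S_{i_1} \cap \dotsb \cap S_{i_n}$ need not be one of the $S_i$, and $f$ need not restrict to a homotopy equivalence on it (e.g.\ for two cones glued along an acyclic complex with non-trivial $\pi_1$), so the hypothesis of \cref{lm:cech-nerve} can genuinely fail. When the index poset is a chain --- in particular $\omega$, the only case needed in the sequel --- finite intersections are minima and \cref{lm:cech-nerve} does apply directly; the compactness argument above is essentially the standard fact that a directed union of CW-complexes along subcomplex inclusions is a homotopy colimit, and it covers the uncountable case at the cost of one appeal to Whitehead's theorem. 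Beyond that, the only real obstacle is the (routine) basepoint and path-component bookkeeping.
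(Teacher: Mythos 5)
Your proof is correct, but it takes a genuinely different route from the paper's. The paper handles the well-ordered case directly via \cref{lm:cech-nerve} (for a chain, a finite intersection $S_{i_1} \cap \dotsb \cap S_{i_n}$ is just $S_{\min(i_1,\dotsc,i_n)}$, so the hypothesis of that lemma is exactly the given hypothesis on the restrictions $f|S_i$), and then reduces an arbitrary directed union to iterated well-ordered unions by Iwamura's lemma from order theory. You instead run the standard colimit argument: compact subsets of the (CW) realizations $|S|$ and $|T|$ lie in finite subcomplexes, hence by directedness in a single $|S_i|$ or $|T_i|$, so $|f|$ induces isomorphisms on all homotopy groups, and Whitehead's theorem upgrades the weak equivalence to a homotopy equivalence. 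Your cautionary remark is exactly the point the paper's Iwamura reduction is designed to address: for a general directed poset the cover $\{S_i\}$ need not satisfy the intersection hypothesis of \cref{lm:cech-nerve}, so one cannot apply it to that cover directly; and your observation that the chain case suffices for the paper's applications matches the paper's own remark that both of its uses of \cref{lm:dirunion} follow from the well-ordered case. The trade-off: your argument avoids any order-theoretic input and is self-contained at the space level, but imports Whitehead's theorem and some $\pi_n$/basepoint bookkeeping; the paper's argument stays entirely within the nerve-lemma framework it has already set up, at the cost of citing Iwamura's lemma for the (not actually needed) fully general directed case.
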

\begin{proof}
In the case where $I$ is a well-ordered set, this is immediate from \cref{lm:cech-nerve}; the two places below where we use this result both follow from this case.  (To deduce the general form of the result, one can appeal to Iwamura's lemma from order theory which reduces an arbitrary directed union to iterated well-ordered unions; see e.g., \cite{Mky}.)
\end{proof}

We say that a simplicial map $f : S -> T$ is a \defn{trivial pseudofibration} if for each $t \in T$, the subcomplex $S|f^{-1}(t) \subseteq S$ is contractible.

\begin{corollary}
\label{lm:quillena}
A trivial pseudofibration is a homotopy equivalence.
\end{corollary}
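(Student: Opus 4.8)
The plan is to exhibit the map $f : S \to T$ as a directed union of restrictions over the finite subcomplexes of $T$, and to reduce the claim on each piece to the hypothesis by "peeling off" simplices of $T$ one at a time, using \cref{lm:cech-nerve} (in its \cref{lm:pushout} form) at each step. Concretely, for a subcomplex $T' \subseteq T$ write $S|f^{-1}(T') \subseteq S$ for the preimage subcomplex (the union of all $S|f^{-1}(t)$ for $t \in T'$, equivalently the full preimage of the vertex set of $T'$); I would prove by induction on $T'$ (ordered by inclusion, starting from a single vertex) that $f$ restricted to $S|f^{-1}(T') \to T'$ is a homotopy equivalence. Since $T = \bigcup_{T' \text{ finite}} T'$ is a directed union of subcomplexes and correspondingly $S = \bigcup_{T'} S|f^{-1}(T')$, \cref{lm:dirunion} then upgrades the result from all finite $T'$ to $f : S \to T$ itself.

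For the inductive step, suppose $T'$ is a finite subcomplex and $s$ is a maximal simplex of $T'$; let $T'' = T' \setminus \{s\}$, still a subcomplex, with one fewer simplex. On the $S$-side, decompose $U := S|f^{-1}(T')$ as the union of $V := S|f^{-1}(T'')$ and $W := S|f^{-1}(s)$ (the preimage of the closed simplex $\bar s$, i.e.\ of the vertex set of $s$ together with all its faces). Their intersection is $S|f^{-1}(\partial s)$, the preimage of the boundary of $s$. The hypothesis that $f$ is a trivial pseudofibration gives that $W = S|f^{-1}(s)$ maps by a trivial pseudofibration to the standard simplex $\bar s$ (since the fiber over any $t\in\bar s$ is contractible by assumption), hence — granting the claim one dimension/size down, or directly since $\bar s$ is contractible and we can set up a suitable sub-induction — $W$ is contractible; similarly $V \cap W = S|f^{-1}(\partial s)$ maps by a trivial pseudofibration to $\partial s$. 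Here I would run a nested induction: the claim that a trivial pseudofibration over a finite complex $K$ is a homotopy equivalence, for $K = \bar s$ a standard simplex and for $K = \partial s$ its boundary, follows from the inductive hypothesis applied to these strictly smaller complexes (both have fewer simplices than $T'$ once $T'$ has at least one positive-dimensional simplex, and the base case $T'$ = finitely many vertices is the disjoint-union-of-contractibles case, immediate from the hypothesis). Then by induction $V \to T''$ and $V\cap W \to \partial s$ and $W \to \bar s$ are all homotopy equivalences, and in particular the inclusion $V \cap W \hookrightarrow W$ is a homotopy equivalence iff $\partial s \hookrightarrow \bar s$ is — but that inclusion is \emph{not} an equivalence, so I instead apply \cref{lm:cech-nerve} directly to the two covers $\{V, W\}$ of $U$ and $\{T'', \bar s\}$ of $T'$, checking that $f$ restricted to each of $V$, $W$, $V\cap W$ is a homotopy equivalence onto $T''$, $\bar s$, $\partial s$ respectively (all triple-and-higher intersections reduce to these since there are only two sets), which is exactly what the induction supplies.

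The step I expect to be the main obstacle is getting the base/bookkeeping of the induction right so that it genuinely closes: one must choose the right notion of "smaller" on finite subcomplexes of $T$ (number of simplices works, or dimension-then-number), verify that $\bar s$ and $\partial s$ are strictly smaller than $T'$ in that order whenever $T'$ is not $0$-dimensional, and confirm that the restriction of a trivial pseudofibration to a preimage subcomplex is again a trivial pseudofibration (this is immediate from the definition, since the fibers are unchanged). The homotopy-theoretic input is entirely packaged in \cref{lm:cech-nerve} and \cref{lm:dirunion}; no further tools are needed, and in particular one never needs to know anything about $S$ or $T$ beyond the fiber-contractibility hypothesis.
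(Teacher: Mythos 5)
Your argument is correct, but it takes a noticeably longer route than the paper, which disposes of the corollary in one line: apply \cref{lm:cech-nerve} directly to the covers $S = \bigcup_{t \in T} S|f^{-1}(t)$ and $T = \bigcup_{t \in T} T|t$, indexed by \emph{all} simplices of $T$; every finite intersection on the $S$-side is $S|f^{-1}(t_1 \cap \dotsb \cap t_n)$, contractible (or empty) by hypothesis, and on the $T$-side it is the standard simplex $T|(t_1 \cap \dotsb \cap t_n)$ (or empty), so all the restrictions are maps between contractible (or empty) complexes and the nerve lemma applies immediately, with no induction and no use of \cref{lm:dirunion}. You instead use only the two-element case of \cref{lm:cech-nerve} (Mayer--Vietoris-style gluing), an induction on the number of simplices of a finite subcomplex $T' \subseteq T$ peeling off a maximal simplex $s$, and then \cref{lm:dirunion} over the directed poset of finite subcomplexes; this closes correctly, because the piece $W = S|f^{-1}(s) \to \bar s$ is handled directly (both sides contractible, so any map between them is a homotopy equivalence) rather than by the sub-induction you worry about, and $T''$ and $\partial s$ genuinely have fewer simplices, so your bookkeeping concern is not an issue. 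What your route buys is that it never invokes the nerve lemma for an infinite cover; what it costs is the induction plus the full directed-union form of \cref{lm:dirunion} (whose proof in the paper is deferred to Iwamura's lemma), which the paper's one-shot argument avoids. Two small cautions: your parenthetical ``equivalently the full preimage of the vertex set of $T'$'' is false when $T'$ is not an induced subcomplex of $T$ (e.g.\ for $T' = \partial s$, that induced subcomplex would contain simplices mapping onto $s$ and would equal $W$, not $V \cap W$), so you must stick with your primary definition $\bigcup_{t \in T'} S|f^{-1}(t) = \{s' \in S \mid f(s') \in T'\}$, under which the identities $U = V \cup W$ and $V \cap W = S|f^{-1}(\partial s)$ do hold; and when the maximal simplex $s$ is an isolated vertex, $\partial s$ and $V \cap W$ are both empty, which is fine but should be noted when citing \cref{lm:cech-nerve}.
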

\begin{proof}
Apply \cref{lm:cech-nerve} to $S = \bigcup_{t \in T} S|f^{-1}(t)$ and $T = \bigcup_{t \in T} T|t$.
\end{proof}

Finally, we come to the notion of Borel structurability.  Let $(X, E)$ be a countable Borel equivalence relation.  We say that a simplicial complex $S$ on $X$ is \defn{Borel} if for each $n$ the $(n+1)$-ary relation ``$\{x_0, \dotsc, x_n\} \in S$'' is Borel, or equivalently $S$ is Borel as a subset of the standard Borel space of finite subsets of $X$.  A Borel simplicial complex $S$ on $X$ is a \defn{Borel structuring of $E$ by simplicial complexes} if in addition each simplex $s \in S$ is contained in a single $E$-class; such an $S$ represents the ``Borel assignment'' $C |-> S_C := S|C$ of the (countable) complex $S_C$ to each $E$-class $C \in X/E$.  More generally, for a class $\@K$ of simplicial complexes (e.g., the contractible ones), $S$ is a \defn{structuring of $E$ by complexes in $\@K$} if $S_C \in \@K$ for each $C \in X/E$; if such a structuring exists, we say that $E$ is \defn{structurable by complexes in $\@K$}.

\section{Proofs}
\label{sec:proofs}

\subsection{Some lemmas}

Let $N = \{\{i\}, \{i, i+1\} \mid i \in \#N\}$ denote the ordered simplicial complex on $\#N = \{0 < 1 < 2 < \dotsc\}$ with an edge between $i, i+1$ for each $i$, whose geometric realization is a ray.

For a simplicial complex $(X, S)$, a set $Y$, and a map $f : X -> Y$, define the \defn{image complex}
\begin{align*}
f(S) := \{f(s) \mid s \in S\},
\end{align*}
which is a simplicial complex on $f(X)$; we write $f(X, S)$ for $(f(X), f(S))$.
%(Indeed, if $\emptyset \ne t \subseteq f(s) \in f(S)$ then $t = f(s \cap f^{-1}(t))$.)
If $(X, S)$ is an ordered simplicial complex, $Y$ is a poset, and $f$ is monotone, then $(f(X), f(S))$ is also ordered.

Let $X$ be a poset and $T$ be an ordered simplicial complex on $X \times \#N^n$, for some $n \in \#N$.  We define the \defn{telescope} $\@T_n(T)$, an ordered simplicial complex on $X \times \#N^n$, by induction on $n$ as follows:
\begin{align*}
\@T_0(T) &:= T, \\
\@T_n(T) &:= (p_1(T) \times N) \cup (\@T_{n-1}(p_1(T)) \times \{0\}) \qquad\text{for $n \ge 1$},
\end{align*}
where $p_i : X \times \#N^n -> X \times \#N^{n-i}$ is the projection onto all but the last $i$ factors.  Explicitly, we have
\begin{align*}
\@T_n(T) = (p_1(T) \times N) \cup (p_2(T) \times N \times \{0\}) \cup \dotsb \cup (p_n(T) \times N \times \{0\}^{n-1}) \cup (p_n(T) \times \{0\}^n)
\end{align*}
(the last term $p_n(T) \times \{0\}^n$ is redundant unless $n = 0$).  Here are some simple properties of $\@T_n(T)$:

\begin{lemma}
\label{lm:telescope-props}
\begin{enumerate}
\item[(a)]  $T \subseteq \@T_n(T)$.
\item[(b)]  The projection $p_n : \@T_n(T) -> p_n(T)$ is a homotopy equivalence (with homotopy inverse the inclusion $p_n(T) \cong p_n(T) \times \{0\}^n \subseteq \@T_n(T)$).
\item[(c)]  For a subset $Z \subseteq X$, we have $\@T_n(T)|(Z \times \#N^n) = \@T_n(T|(Z \times \#N^n))$.
\item[(d)]  If $T$ is (at most) $k$-dimensional, then $\@T_n(T)$ is (at most) $(k+1)$-dimensional.
\end{enumerate}
\end{lemma}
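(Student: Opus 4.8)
The plan is to prove all four parts by a common induction on $n$, the case $n=0$ being immediate since $\@T_0(T)=T$ (and a $k$-dimensional complex is in particular $(k+1)$-dimensional). For the inductive step I would work from the defining decomposition $\@T_n(T)=U\cup V$, where $U:=p_1(T)\times N$ and $V:=\@T_{n-1}(p_1(T))\times\{0\}$, and I note that $V$ is contained in the $0$-level of the last coordinate, so that $U\cap V$ is the restriction of $U$ to that level. Parts (a), (c), (d) go through by self-contained inductions on $n$; the proof of (b) additionally invokes (a) one level down.

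For (a): by induction it suffices to check that each simplex $s\in T$ lies in $U=p_1(T)\times N$ — its $p_1$-image is a simplex of $p_1(T)$ by definition of the image complex, and its last coordinate spans a vertex or an edge of the ray $N$ — so $T\subseteq U\subseteq\@T_n(T)$. For (c): restriction to $Z\times\#N^n$ commutes with the finite union defining $\@T_n(T)$, with products against $N$ and against $\{0\}$, and — the one identity worth spelling out — with $p_1$, namely $p_1(T)|(Z\times\#N^{n-1})=p_1(T|(Z\times\#N^n))$ (both sides being the set of $p_1$-images of simplices of $T$ whose $X$-coordinates lie in $Z$); feeding in the inductive case of (c) gives $\@T_n(T)|(Z\times\#N^n)=\@T_n(T|(Z\times\#N^n))$. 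For (d): $\dim p_1(T)\le\dim T\le k$ since $|p_1(s)|\le|s|$, hence $\dim U=\dim(p_1(T)\times N)\le k+1$, and $\dim V=\dim\@T_{n-1}(p_1(T))\le k+1$ by the inductive case of (d), so $\dim\@T_n(T)\le k+1$.

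Part (b) is the substantive one. First, $U\cap V=(p_1(T)\times N)|(X\times\#N^{n-1}\times\{0\})\cap V=(p_1(T)\cap\@T_{n-1}(p_1(T)))\times\{0\}=p_1(T)\times\{0\}$, the last step being part (a) at level $n-1$ applied to $p_1(T)$. Next, the inclusion $U\cap V=p_1(T)\times\{0\}\hookrightarrow p_1(T)\times N=U$ is a homotopy equivalence: the projection $p_1(T)\times N\to p_1(T)$ is a trivial pseudofibration — the preimage of any simplex $\sigma$ is the product $(p_1(T)|\sigma)\times N$ of a standard simplex with the ray $N$, hence contractible — so it is a homotopy equivalence by \cref{lm:quillena}, and $p_1(T)\times\{0\}\hookrightarrow p_1(T)\times N$ is a section of it. Hence by \cref{lm:pushout} the inclusion $V\hookrightarrow\@T_n(T)$ is a homotopy equivalence. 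Identifying $V$ with $\@T_{n-1}(p_1(T))$, the restriction to $V$ of the projection $p_n\colon\@T_n(T)\to p_n(T)$ is the projection $p_{n-1}\colon\@T_{n-1}(p_1(T))\to p_{n-1}(p_1(T))=p_n(T)$, a homotopy equivalence by part (b) at level $n-1$; composing with the homotopy equivalence $V\hookrightarrow\@T_n(T)$ shows $p_n$ is a homotopy equivalence. Finally the ``homotopy inverse'' clause is automatic: $p_n$ composed with the inclusion $p_n(T)\cong p_n(T)\times\{0\}^n\subseteq\@T_n(T)$ is the identity of $p_n(T)$, so this inclusion, being a section of the homotopy equivalence $p_n$, is a two-sided homotopy inverse of it.

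The step I expect to be the main obstacle is getting the inductive bookkeeping in (b) exactly right: the intersection $U\cap V=p_1(T)\times\{0\}$ must draw on part (a) at level $n-1$ (applied to $p_1(T)$), not at level $n$; the restriction of $p_n$ to $V$ must be correctly identified with $p_{n-1}$ so that part (b) at level $n-1$ applies; and the small homotopy-theoretic facts used — that a map between contractible complexes is a homotopy equivalence, that a section of a homotopy equivalence is a two-sided homotopy inverse, and that a standard simplex times $N$ is contractible — need to be in place. All the actual homotopy theory is routed through \cref{lm:pushout} and \cref{lm:quillena}; the rest is combinatorial bookkeeping with the product and image-complex constructions.
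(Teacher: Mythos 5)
Correct, and essentially the same proof as the paper's: you induct on $n$ via the defining decomposition of $\@T_n(T)$ into $p_1(T) \times N$ and $\@T_{n-1}(p_1(T)) \times \{0\}$, reduce (b) to the statement that the inclusion of the bottom layer $\@T_{n-1}(p_1(T)) \times \{0\}$ is a homotopy equivalence, and dispose of (a), (c), (d) by the routine checks the paper leaves as ``straightforward''. The only difference is in how that key step is justified: the paper observes an explicit deformation retraction of $|\@T_n(T)|$ onto $|\@T_{n-1}(p_1(T)) \times \{0\}|$, whereas you derive the same homotopy equivalence from \cref{lm:quillena} and \cref{lm:pushout}, which is a harmless substitution.
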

\begin{proof}
(a), (c), and (d) are straightforward.  For $n \ge 1$, it is easily seen that $|\@T_n(T)|$ deformation retracts onto $|\@T_{n-1}(p_1(T)) \times \{0\}| \cong |\@T_{n-1}(p_1(T))|$; a simple induction then yields (b).
\end{proof}

We need one more (straightforward) lemma:

\begin{lemma}
\label{lm:simpcont-surj}
A trivial pseudofibration $f : S -> T$ is surjective on simplices.
\end{lemma}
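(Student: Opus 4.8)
The plan is the following. It suffices, for each fixed simplex $t \in T$, to produce some $s \in S$ with $f(s) = t$ — and since any such $s$ automatically lies in $f^{-1}(t)$, we may look only inside the fiber $S|f^{-1}(t)$. When $\dim t = 0$ this is immediate, as $S|f^{-1}(t)$ is contractible, hence nonempty, so any of its vertices maps onto $t$. So assume $n := \dim t \ge 1$ and write $t = \{y_0, \dots, y_n\}$.

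The key idea is to pass to the subcomplex
\[
A := \{\, s \in S \mid s \subseteq f^{-1}(t),\ f(s) \subsetneq t \,\} = \bigcup_{i=0}^{n} S|f^{-1}(t \setminus \{y_i\})
\]
of $S|f^{-1}(t)$ — the union of the fibers over the facets of $t$. One checks readily that $A$ is a genuine subcomplex on the vertex set $f^{-1}(t)$ (it is downward closed and, since $n \ge 1$, contains every singleton of $f^{-1}(t)$), and that $A \ne S|f^{-1}(t)$ precisely when some simplex of $S$ maps onto $t$; so it suffices to show $A$ is a \emph{proper} subcomplex. Also, $f$ restricts to a simplicial map $A \to \partial t$, where $\partial t := \{\, s \mid \emptyset \ne s \subsetneq t \,\}$ is the boundary of the standard simplex on $t$, with $|\partial t| \cong S^{n-1}$.

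Now the crux: the restricted map $A \to \partial t$ is again a trivial pseudofibration. Indeed, for a simplex $t' \in \partial t$ one has $A|f^{-1}(t') = S|f^{-1}(t')$ (once $s \subseteq f^{-1}(t') \subseteq f^{-1}(t)$, the condition ``$f(s) \subsetneq t$'' holds automatically), and this is contractible by hypothesis on $f$ since $t'$ is a nonempty simplex of $T$. So by \cref{lm:quillena}, $f : A \to \partial t$ is a homotopy equivalence, whence $|A| \simeq |\partial t| \cong S^{n-1}$ is \emph{not} contractible. Since $S|f^{-1}(t)$ \emph{is} contractible, $A$ must be a proper subcomplex of it, completing the proof. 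I do not anticipate any real obstacle once one thinks to consider $A$: the only things to be careful about are the $\dim t = 0$ case (handled above) and the routine verifications that $A$ is a subcomplex and that $f$ restricts to $\partial t$ as stated, while all the homotopy-theoretic content is black-boxed into \cref{lm:quillena} and the standard non-contractibility of $\partial\Delta^n \cong S^{n-1}$.
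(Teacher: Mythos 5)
Your proof is correct and follows essentially the same route as the paper: your $A$ is exactly the paper's subcomplex $S' = \{s \in S \mid f(s) \subsetneq t\}$, and both arguments show the restriction $A \to \partial t$ is a trivial pseudofibration, invoke \cref{lm:quillena}, and conclude from the non-contractibility of $\partial t$ versus the contractibility of $S|f^{-1}(t)$ that some simplex must map onto $t$. The only (harmless) difference is that you treat the $\dim t = 0$ case separately, which the paper's phrasing absorbs into the general argument.
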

\begin{proof}
Let $t \in T$.  Put $S' := \{s \in S \mid f(s) \subsetneq t\} = S|f^{-1}(t) \setminus \{s \in S \mid f(s) = t\}$.  Since $f$ is a trivial pseudofibration, for every $t' \subsetneq t$, $S'|f^{-1}(t') = S|f^{-1}(t')$ is contractible; thus $f : S' -> T|t \setminus \{t\}$ is a homotopy equivalence.  But $T|t \setminus \{t\}$ is the boundary of the simplex $t$, hence not contractible; thus for $S|f^{-1}(t)$ to be contractible, there must be $s \in S$ with $f(s) = t$.
\end{proof}

\subsection{The main construction}

We now give the main construction in the proof of \cref{thm:lfsc-simp}.  Let $(X, S)$ be a locally countable simplicial complex, which we may assume to be ordered by taking any linear order on $X$.  By local countability, for each $n$ we may find a function $c_n : S^{(n)} -> \#N$ which colors the intersection graph on the $n$-simplices $S^{(n)}$, which means that for $s, t \in S^{(n)}$ with $s \ne t$ and $s \cap t \ne \emptyset$ we have $c_n(s) \ne c_n(t)$.
%For example, we may simply enumerate the $n$-simplices in each connected component, then assign color $k$ to the $k$th simplices.
The idea is that for each $n$, we will multiply the complex by the ray $N$ and then attach each $n$-simplex $s \in S^{(n)}$ at position $c_n(s)$ along the ray, so that distinct simplices have non-overlapping boundaries.

Let $S_n := \bigcup_{m \le n} S^{(m)} = \{s \in S \mid \dim(s) \le n\}$, the $n$-skeleton of $S$.  We will inductively define ordered simplicial complexes $T_n$ on $X \times \#N^n$ and for $n \ge 1$, $T_n'$ on $X \times \#N^n$ such that
\begin{align*}
T_n \subseteq S_n \times N^n, &&
T_{n+1}' \subseteq S_n \times N^{n+1}, &&
T_n \times N \subseteq T_{n+1}' \subseteq T_{n+1},
\end{align*}
fitting into the following commutative diagram of monotone simplicial maps:
\begin{equation*}
\begin{tikzcd}
&&&& T_2 \times N \dar[->>,"p_1"',"\simeq"] \rar[hook] & T_3' \ar[dddl,->>,"p_3","\simeq"'] \rar[hook] & \dotsb \\
&& T_1 \times N \dar[->>,"p_1"',"\simeq"] \rar[hook] & T_2' \ar[ddl,->>,"p_2","\simeq"'] \rar[hook] & T_2 \ar[dd,->>,"p_2"',"\simeq"] \\
T_0 \times N \dar[->>,"p_1"',"\simeq"] \rar[hook] & T_1' \ar[dl,->>,"p_1","\simeq"'] \rar[hook] & T_1 \ar[d,->>,"p_1"',"\simeq"] \\
T_0 = S_0 \ar[rr,hook] && S_1 \ar[rr,hook] && S_2 \ar[rr,hook] && \dotsb
\end{tikzcd}
\tag{$*$}
\end{equation*}
The horizontal maps are the inclusions, while the vertical/diagonal maps are the projections $p_i : X \times \#N^n -> X \times \#N^{n-i}$ onto all but the last $i$ factors as before; furthermore each vertical/diagonal map will be a trivial pseudofibration between the respective complexes.

Start with $T_0 := S_0$.  Given $T_n$ such that $p_n : T_n -> S_n$ is a trivial pseudofibration, put
\begin{align*}
T_{n+1}' := (T_n \times N) \cup \bigcup_{s \in S^{(n+1)}} (\@T_n(T_n|(s \times \#N^n)) \times \{c_{n+1}(s)\}).
\end{align*}
Clearly this is an ordered simplicial complex on $X \times \#N^{n+1}$.

\begin{claim}
$p_{n+1} : (X \times \#N^{n+1}, T_{n+1}') -> (X, S_n)$ is a trivial pseudofibration.
\end{claim}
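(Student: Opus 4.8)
The plan is to check the two requirements for a trivial pseudofibration in turn.  The map $p_{n+1}$ lands in $S_n$ by the inductively-maintained containment $T_{n+1}' \subseteq S_n \times N^{n+1}$ (a routine consequence of $T_n \subseteq S_n \times N^n$ and the explicit telescope formula, since $\@T_n$ only projects the $X$-coordinate and inserts subcomplexes of $N$ in the $\#N$-coordinates), so what has to be shown is that for each simplex $t \in S_n$ the fiber $T_{n+1}'|p_{n+1}^{-1}(t) = T_{n+1}'|(t \times \#N^{n+1})$ is contractible.  I would do this by computing the fiber explicitly, recognizing it as a union of contractible subcomplexes whose overlaps are dictated by the coloring $c_{n+1}$, and then assembling contractibility through \cref{lm:pushout,lm:dirunion}.

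To compute the fiber, restrict each of the two summands in the definition of $T_{n+1}'$ to the box $t \times \#N^{n+1}$.  Writing $F_u := T_n|(u \times \#N^n)$ for $u \subseteq X$, the summand $T_n \times N$ restricts to $F_t \times N$, and for an $(n+1)$-simplex $s \in S^{(n+1)}$ the summand $\@T_n(T_n|(s \times \#N^n)) \times \{c_{n+1}(s)\}$ restricts --- by \cref{lm:telescope-props}(c) --- to $V_s := \@T_n(F_{t \cap s}) \times \{c_{n+1}(s)\}$, which is the empty complex unless $s \cap t \ne \emptyset$.  Thus, with $U := F_t \times N$,
\[
  T_{n+1}'|(t \times \#N^{n+1}) \;=\; U \cup \bigcup\nolimits_s V_s ,
\]
the union ranging over the $(n+1)$-simplices $s$ meeting $t$ (of which there are at most countably many, since $t$ is finite and $S$ is locally countable).

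Now each piece is contractible.  By the inductive hypothesis $p_n : T_n \to S_n$ is a trivial pseudofibration, so $F_t = T_n|p_n^{-1}(t)$ is contractible; and for nonempty $t \cap s$, which is a face of the $(n+1)$-simplex $s$ and hence lies in $S_n$, the complex $F_{t\cap s} = T_n|p_n^{-1}(t\cap s)$ is contractible.  Therefore $U = F_t \times N$ is contractible ($N$ being a ray), and $V_s \cong \@T_n(F_{t\cap s})$ is contractible by \cref{lm:telescope-props}(b), which exhibits it as homotopy equivalent to $p_n(F_{t\cap s})$ --- the standard simplex on $t \cap s$, using that $p_n$ is surjective on simplices (\cref{lm:simpcont-surj}).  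The overlaps are exactly where the coloring enters.  For $s \ne s'$ both meeting $t$: all vertices of $V_s$ have last coordinate $c_{n+1}(s)$ and all vertices of $V_{s'}$ have last coordinate $c_{n+1}(s')$, so the two complexes are vertex-disjoint if $c_{n+1}(s) \ne c_{n+1}(s')$; and if $c_{n+1}(s) = c_{n+1}(s')$ then $s \cap s' = \emptyset$ since $c_{n+1}$ properly colors the intersection graph, so $V_s$ (supported on $t \cap s$ in the first coordinate) and $V_{s'}$ (supported on $t \cap s'$) are again vertex-disjoint; either way $V_s \cap V_{s'} = \emptyset$.  Finally $U \cap V_s = F_{t\cap s} \times \{c_{n+1}(s)\} \cong F_{t\cap s}$ --- using $F_{t\cap s} \subseteq \@T_n(F_{t\cap s})$ from \cref{lm:telescope-props}(a) --- which is contractible.

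To assemble, enumerate the relevant simplices as $s_1, s_2, \dots$ and set $P^{(0)} := U$ and $P^{(k)} := P^{(k-1)} \cup V_{s_k}$; then $P^{(k-1)} \cap V_{s_k} = U \cap V_{s_k}$ (the other overlaps being empty) is contractible, so \cref{lm:pushout} and induction on $k$ show that each $P^{(k)}$ is contractible, whence \cref{lm:dirunion}, applied to the directed union $\bigcup_k P^{(k)}$, shows the fiber is contractible.  I expect the main obstacle to be the bookkeeping in computing the fiber --- getting the restricted telescope summand right via \cref{lm:telescope-props}(c), and above all seeing clearly that the coloring hypothesis is precisely what makes the $V_s$ pairwise disjoint and makes each $V_s$ meet $U$ only in the ``old'' fiber $F_{t\cap s}$; once that combinatorial picture is in place, the homotopy-theoretic assembly is routine.
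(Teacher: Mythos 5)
Your proposal is correct and follows essentially the same route as the paper: the same decomposition of the fiber into the cylinder piece $T_n|(t \times \#N^n) \times N$ and the telescope pieces indexed by $(n+1)$-simplices meeting $t$ (via \cref{lm:telescope-props}(c)), the same contractibility checks using \cref{lm:telescope-props}(a),(b) and the inductive hypothesis, the same pairwise disjointness of the telescope pieces via the coloring, and the same assembly by \cref{lm:pushout} and \cref{lm:dirunion}. The only cosmetic differences are that you explicitly invoke \cref{lm:simpcont-surj} to identify $p_n(T_n|((s\cap t)\times\#N^n))$ with the standard simplex on $s\cap t$ (the paper leaves this implicit) and that you assemble along a countable enumeration rather than over finite subcollections.
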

\begin{proof}
Let $t \in S_n$; we must check that $T_{n+1}'|p_{n+1}^{-1}(t) = T_{n+1}'|(t \times \#N^{n+1})$ is contractible.  We have
\begin{align*}
T_{n+1}'|(t \times \#N^{n+1})
&= (T_n|(t \times \#N^n) \times N) \cup \bigcup_{s \in S^{(n+1)}} (\@T_n(T_n|((s \cap t) \times \#N^n)) \times \{c_{n+1}(s)\}) \\
&= (\underbrace{T_n|p_n^{-1}(t) \times N}_A) \cup \bigcup_{s \in S^{(n+1)}} (\underbrace{\@T_n(T_n|p_n^{-1}(s \cap t)) \times \{c_{n+1}(s)\}}_{B_s})
\end{align*}
(using \cref{lm:telescope-props}(c)); let $A, B_s$ be as shown.  The subcomplex $A$ is contractible since $p_n : T_n -> S_n$ is a trivial pseudofibration by the induction hypothesis whence $T_n|p_n^{-1}(t)$ is contractible.  For each $s \in S^{(n+1)}$ such that $s \cap t \ne \emptyset$ (otherwise $B_s$ is empty), the subcomplex $B_s$ is contractible since the telescope $\@T_n(T_n|p_n^{-1}(s \cap t))$ is homotopy equivalent (by \cref{lm:telescope-props}(b)) to the projection $p_n(T_n|p_n^{-1}(s \cap t)) = p_n(T_n)|(s \cap t) = S_n|(s \cap t)$ which is a standard simplex; and also $A \cap B_s$ is contractible since
\begin{align*}
A \cap B_s
&= (T_n|(t \times \#N^n) \cap \@T_n(T_n|((s \cap t) \times \#N^n))) \times \{c_{n+1}(s)\} \\
&= (T_n|((s \cap t) \times \#N^n) \cap \@T_n(T_n|((s \cap t) \times \#N^n))) \times \{c_{n+1}(s)\} \\
&= T_n|((s \cap t) \times \#N^n) \times \{c_{n+1}(s)\} \\
&= T_n|p_n^{-1}(s \cap t) \times \{c_{n+1}(s)\}
\end{align*}
(the second equality since the telescope is a complex on $(s \cap t) \times \#N^n$, the third equality by \cref{lm:telescope-props}(a)), which is contractible because again $p_n$ is a trivial pseudofibration.  For two distinct $s, s' \in S^{(n+1)}$, we have $B_s \cap B_{s'} = \emptyset$: either $c_{n+1}(s) \ne c_{n+1}(s')$ in which case clearly $B_s \cap B_{s'} = \emptyset$, or $c_{n+1}(s) = c_{n+1}(s')$ whence by the coloring property of $c_{n+1}$ we have $s \cap s' = \emptyset$.  Now by repeated use of \cref{lm:pushout}, we get that $A \cup B_{s_1} \cup \dotsb \cup B_{s_m}$ is contractible for every finite collection of $s_1, \dotsc, s_m \in S^{(n+1)}$, whence by \cref{lm:dirunion}, $T_{n+1}'|(t \times \#N^{n+1})$ is contractible.
\end{proof}

Now put
\begin{align*}
T_{n+1} := T_{n+1}' \cup \{s \times \{0\}^n \times \{c_{n+1}(s)\} \mid s \in S^{(n+1)}\}.
\end{align*}

\begin{claim}
$T_{n+1}$ is an ordered simplicial complex on $X \times \#N^{n+1}$.
\end{claim}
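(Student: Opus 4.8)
The plan is to verify the defining properties of an ordered simplicial complex on $X \times \#N^{n+1}$. Every element of $T_{n+1}$ is a finite nonempty chain: for the elements of $T_{n+1}'$ this was already noted, and a newly adjoined set $s \times \{0\}^n \times \{c_{n+1}(s)\}$ with $s = \{x_0 < \dotsb < x_{n+1}\} \in S^{(n+1)}$ is simply the chain $\{(x_0,0,\dotsc,0,c_{n+1}(s)) < \dotsb < (x_{n+1},0,\dotsc,0,c_{n+1}(s))\}$ in the product poset. Also $T_{n+1} \supseteq T_{n+1}'$ contains every singleton of $X \times \#N^{n+1}$, since $T_{n+1}'$ is an ordered simplicial complex on $X \times \#N^{n+1}$. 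So the only real content is closure under nonempty subsets, and since $T_{n+1}'$ already has this property, it suffices to show that for each $s \in S^{(n+1)}$ and each nonempty $s' \subseteq s$ the set $s' \times \{0\}^n \times \{c_{n+1}(s)\}$ lies in $T_{n+1}$; the case $s' = s$ is trivial, so assume $s' \subsetneq s$.

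The tempting move is to locate $s' \times \{0\}^n \times \{c_{n+1}(s)\}$ inside the summand $T_n \times N$ of $T_{n+1}'$, but this fails: $s' \times \{0\}^n$ need not belong to $T_n$ (already at $n=1$ one checks that a nonzero coloring value obstructs this). The right place to look is the ``base'' of the telescope: in the explicit description of $\@T_n(T)$ given above, the final summand $p_n(T) \times \{0\}^n$ is a subcomplex, so $p_n(T_n|(s \times \#N^n)) \times \{0\}^n \subseteq \@T_n(T_n|(s \times \#N^n))$. Hence it is enough to check that $s'$ occurs as a simplex of the image complex $p_n(T_n|(s \times \#N^n))$. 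Since $\dim(s') < n+1$, we have $s' \in S_n$. By the induction hypothesis $p_n : T_n -> S_n$ is a trivial pseudofibration, so by \cref{lm:simpcont-surj} it is surjective on simplices; pick $v \in T_n$ with $p_n(v) = s'$. As $p_n$ is the projection onto the $X$-coordinate, $v \subseteq s' \times \#N^n \subseteq s \times \#N^n$, whence $v \in T_n|(s \times \#N^n)$ and so $s' = p_n(v) \in p_n(T_n|(s \times \#N^n))$.

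Combining these would give $s' \times \{0\}^n \in p_n(T_n|(s \times \#N^n)) \times \{0\}^n \subseteq \@T_n(T_n|(s \times \#N^n))$, hence $s' \times \{0\}^n \times \{c_{n+1}(s)\} \in \@T_n(T_n|(s \times \#N^n)) \times \{c_{n+1}(s)\}$, which is one of the summands defining $T_{n+1}'$; therefore $s' \times \{0\}^n \times \{c_{n+1}(s)\} \in T_{n+1}' \subseteq T_{n+1}$, as required. The only genuine obstacle is the realization in the second paragraph: the proper faces of the newly attached top-dimensional simplices cannot be recovered from the $T_n \times N$ part of $T_{n+1}'$ and must instead come from the bottom face $p_n(T_n|(s \times \#N^n)) \times \{0\}^n$ of the telescope, and it is exactly surjectivity on simplices of the trivial pseudofibration $p_n$ that guarantees this face contains all of $S_n|s$.
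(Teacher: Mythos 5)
Your proof is correct and follows essentially the same route as the paper: reduce to proper faces $s' \subsetneq s$, note $s' \in S_n$, use that the trivial pseudofibration $p_n : T_n \to S_n$ is surjective on simplices (\cref{lm:simpcont-surj}) to get $s' \in p_n(T_n|(s \times \#N^n))$, and locate $s' \times \{0\}^n \times \{c_{n+1}(s)\}$ in the base $p_n(T_n|(s \times \#N^n)) \times \{0\}^n$ of the telescope, hence in $T_{n+1}' \subseteq T_{n+1}$. Your extra observations (chains, singletons, and why the $T_n \times N$ part does not suffice) are accurate but just make explicit what the paper leaves implicit.
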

\begin{proof}
The only thing that needs to be checked is that for each $s \in S^{(n+1)}$, a nonempty subset $s' \times \{0\}^n \times \{c_{n+1}(s)\}$ of $s \times \{0\}^n \times \{c_{n+1}(s)\}$ is still in $T_{n+1}$.  We may assume $s' \subsetneq s$.  Then $s' \in S_n$, so since $p_n : T_n -> S_n$ is a trivial pseudofibration, hence surjective on simplices, we have $s' \in p_n(T_n|(s \times \#N^n))$, whence $s' \times \{0\}^n \times \{c_{n+1}(s)\} \in p_n(T_n|(s \times \#N^n)) \times \{0\}^n \times \{c_{n+1}(s)\} \subseteq \@T_n(T_n|(s \times \#N^n)) \times \{c_{n+1}(s)\} \subseteq T_{n+1}' \subseteq T_{n+1}$.
\end{proof}

\begin{claim}
$p_{n+1} : (X \times \#N^{n+1}, T_{n+1}) -> (X, S_{n+1})$ is a trivial pseudofibration.
\end{claim}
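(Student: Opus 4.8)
The plan is to verify the defining property of a trivial pseudofibration fiberwise in $t \in S_{n+1}$, treating the cases $\dim t \le n$ and $\dim t = n+1$ separately. The key observation will be that for a given $t$, the only simplex of $T_{n+1} \setminus T_{n+1}'$ lying over $t$ (i.e.\ with $p_{n+1}$-image $\subseteq t$) is the top simplex $t \times \{0\}^n \times \{c_{n+1}(t)\}$ adjoined when $s = t$: any proper face $s' \times \{0\}^n \times \{c_{n+1}(s)\}$ of an adjoined simplex (so $s' \subsetneq s$) has $s' \in S_n$, hence lies in $T_{n+1}'$ already, since $p_n$ is surjective on simplices (\cref{lm:simpcont-surj}) so that $s' \in p_n(T_n|(s \times \#N^n))$ --- exactly as in the proof that $T_{n+1}$ is a complex. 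In particular, for $\dim t \le n$ (where $s' = s$ is impossible) this gives $T_{n+1}|(t \times \#N^{n+1}) = T_{n+1}'|(t \times \#N^{n+1})$, which is contractible by the previous claim.

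So I would then fix $t \in S^{(n+1)}$ and write $c := c_{n+1}(t)$ and $\widehat T := T_n|(t \times \#N^n)$, noting that $p_n(\widehat T) = S_n|t = \partial t$ is the boundary of $t$ (again using surjectivity of $p_n$). Just as in the previous claim (via \cref{lm:telescope-props}(c)) one computes
\[
  T_{n+1}'|(t \times \#N^{n+1}) = (\widehat T \times N) \cup \bigcup_{\substack{s \in S^{(n+1)} \\ s \cap t \ne \emptyset}} C_s, \qquad C_s := \@T_n\big(T_n|((s \cap t) \times \#N^n)\big) \times \{c_{n+1}(s)\},
\]
where $C_t = \@T_n(\widehat T) \times \{c\}$; and by the observation above,
\[
  T_{n+1}|(t \times \#N^{n+1}) = \big(T_{n+1}'|(t \times \#N^{n+1})\big) \cup P, \qquad P := C_t \cup \{t \times \{0\}^n \times \{c\}\},
\]
with intersection $\big(T_{n+1}'|(t \times \#N^{n+1})\big) \cap P = C_t$. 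By \cref{lm:pushout} it then suffices to show that $P$ is contractible and that the inclusion $C_t \hookrightarrow T_{n+1}'|(t \times \#N^{n+1})$ is a homotopy equivalence.

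Contractibility of $P$ should be easy: $P$ is the standard $(n+1)$-simplex on $t \times \{0\}^n \times \{c\}$ glued onto $C_t$ along its boundary $\partial t \times \{0\}^n \times \{c\} = (p_n(\widehat T) \times \{0\}^n) \times \{c\}$, and by \cref{lm:telescope-props}(b) the inclusion $p_n(\widehat T) \times \{0\}^n \hookrightarrow \@T_n(\widehat T)$ is a homotopy equivalence (being a homotopy inverse to $p_n$), so \cref{lm:pushout} shows the standard simplex includes into $P$ as a homotopy equivalence, whence $P$ is contractible. The second point is the main obstacle: unlike in the previous claim the relevant complexes here are homotopy equivalent to $S^n$ rather than contractible, so I cannot simply use the ``everything contractible'' form of \cref{lm:pushout} and must instead keep track of which inclusions are homotopy equivalences and chain them via the two-out-of-three property. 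First, each $C_s$ with $s \ne t$ is contractible --- homotopy equivalent via $p_n$ (\cref{lm:telescope-props}(b)) to $p_n(T_n|((s\cap t)\times\#N^n)) = S_n|(s\cap t)$, a standard simplex, exactly as in the previous claim --- and these $C_s$ are pairwise disjoint with each $(\widehat T \times N) \cap C_s = T_n|((s \cap t) \times \#N^n) \times \{c_{n+1}(s)\}$ a contractible fiber of $p_n$ (using \cref{lm:telescope-props}(a)); so by repeated use of \cref{lm:pushout} and \cref{lm:dirunion} the inclusion $\widehat T \times N \hookrightarrow W' := (\widehat T \times N) \cup \bigcup_{s \ne t} C_s$ is a homotopy equivalence. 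Next, the inclusion $\widehat T \times \{c\} \hookrightarrow C_t = \@T_n(\widehat T) \times \{c\}$ --- identified with $\widehat T \hookrightarrow \@T_n(\widehat T)$ --- is a homotopy equivalence: composing it with the homotopy equivalence $p_n \colon \@T_n(\widehat T) \to p_n(\widehat T) = \partial t$ of \cref{lm:telescope-props}(b) yields $p_n|\widehat T \colon \widehat T \to \partial t$, which is a trivial pseudofibration (its fiber over $t' \in \partial t$ being $T_n|(t' \times \#N^n)$, contractible by the induction hypothesis), hence a homotopy equivalence by \cref{lm:quillena}, so two-out-of-three applies. Since $W' \cap C_t = \widehat T \times \{c\}$ (using \cref{lm:telescope-props}(a)), a further application of \cref{lm:pushout} shows $W' \hookrightarrow T_{n+1}'|(t \times \#N^{n+1})$ is a homotopy equivalence; then combining the homotopy equivalences $\widehat T \times \{c\} \hookrightarrow \widehat T \times N \hookrightarrow W' \hookrightarrow T_{n+1}'|(t \times \#N^{n+1})$ and $\widehat T \times \{c\} \hookrightarrow C_t$ via two-out-of-three shows $C_t \hookrightarrow T_{n+1}'|(t \times \#N^{n+1})$ is a homotopy equivalence, as required. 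Finally, \cref{lm:pushout} gives that $P \hookrightarrow T_{n+1}|(t \times \#N^{n+1})$ is a homotopy equivalence, so $T_{n+1}|(t \times \#N^{n+1})$ is contractible, completing the verification.
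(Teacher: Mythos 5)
Your proof is correct, and it follows the paper's overall skeleton (dispose of $\dim t \le n$ by noting no new simplices lie over $t$; for $t \in S^{(n+1)}$ glue in the top simplex via \cref{lm:pushout}), but the key homotopy-equivalence step is done by a genuinely different, and longer, route. The paper glues the full simplex $S|t \times \{0\}^n \times \{c\}$ onto $T_{n+1}'|p_{n+1}^{-1}(t)$ along its boundary $S_n|t \times \{0\}^n \times \{c\}$, and sees that this boundary includes as a homotopy equivalence by a one-line trick: the restriction $p_{n+1} \colon T_{n+1}'|p_{n+1}^{-1}(t) \to S_n|t$ is again a trivial pseudofibration (its fibers are exactly fibers covered by the previous claim), hence a homotopy equivalence by \cref{lm:quillena}, and the inclusion in question is a section of it, hence a homotopy equivalence by two-out-of-three; one application of \cref{lm:pushout} then finishes, with contractibility coming from $S|t$ being a standard simplex. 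You instead glue along $C_t = \@T_n(\widehat T) \times \{c\}$, which forces you to (i) re-run the previous claim's decomposition of the fiber into $\widehat T \times N$ and the pieces $C_s$, chaining \cref{lm:pushout}, \cref{lm:dirunion} and two-out-of-three to show $C_t \hookrightarrow T_{n+1}'|(t \times \#N^{n+1})$ is a homotopy equivalence, and (ii) separately check that $P$ is contractible via \cref{lm:telescope-props}(b), which is in effect the paper's final ``simplex glued along its boundary'' step. Your version is self-contained and correctly isolates the real issue (the fiber of $T_{n+1}'$ over an $(n+1)$-simplex is sphere-like, not contractible, so the ``everything contractible'' form of \cref{lm:pushout} is unavailable), whereas the paper's version reuses the previous claim wholesale through the restricted trivial pseudofibration and is much shorter. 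One step you use without comment --- that $\widehat T \times \{c\} \hookrightarrow \widehat T \times N$ is a homotopy equivalence --- is standard (the ray $|N|$ deformation retracts to the vertex $c$), but deserves a word; alternatively it follows from \cref{lm:quillena} applied to the projection $\widehat T \times N \to \widehat T$ together with two-out-of-three, in the same spirit as the rest of your argument.
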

\begin{proof}
Let $s \in S_{n+1}$; we must check that $T_{n+1}|p_{n+1}^{-1}(s)$ is contractible.  If $s \in S_n$ then clearly $T_{n+1}|p_{n+1}^{-1}(s) = T_{n+1}'|p_{n+1}^{-1}(s)$ so this follows from the previous claim that $p_{n+1} : T_{n+1}' -> S_n$ is a trivial pseudofibration.  So we may assume that $s \in S^{(n+1)}$, in which case
\begin{align*}
T_{n+1}|p_{n+1}^{-1}(s)
&= T_{n+1}'|p_{n+1}^{-1}(s) \cup \{s \times \{0\}^n \times \{c_{n+1}(s)\}\}.
\end{align*}
Since $p_{n+1} : T_{n+1}' -> S_n$ is a trivial pseudofibration, so is the restriction $p_{n+1} : T_{n+1}'|p_{n+1}^{-1}(s) -> S_n|s$; but this restriction has one-sided inverse the inclusion $S_n|s \cong S_n|s \times \{0\}^n \times \{c_{n+1}(s)\} \subseteq \@T_n(T_n|(s \times \#N^n)) \times \{c_{n+1}(s)\} \subseteq T_{n+1}'|p_{n+1}^{-1}(s)$, which is therefore a homotopy equivalence.  Now applying \cref{lm:pushout} to
\begin{align*}
T_{n+1}|p_{n+1}^{-1}(s) = T_{n+1}'|p_{n+1}^{-1}(s) \cup (S|s \times \{0\}^n \times \{c_{n+1}(s)\}),
\end{align*}
where the two subcomplexes on the right-hand side have intersection $S_n|s \times \{0\}^n \times \{c_{n+1}(s)\}$, yields that the inclusion $S|s \times \{0\}^n \times \{c_{n+1}(s)\} \subseteq T_{n+1}|p_{n+1}^{-1}(s)$ is a homotopy equivalence; but $S|s$ is a standard simplex, hence contractible, whence $T_{n+1}|p_{n+1}^{-1}(s)$ is contractible.
\end{proof}

This completes the definition of the complexes $T_n, T_n'$ and the verification that $p_n : T_n -> S_n$ is a homotopy equivalence for each $n$.  Note that from the definition and \cref{lm:telescope-props}(d), it is clear that each $T_n$ is $n$-dimensional.

\subsection{The constant bound}

We next bound the number of edges containing a point in $T_n$.  To do so, we will define for each $n \ge 1$ a constant $K_n$ such that for each $y \in X \times \#N^n$ there are at most $K_n$ distinct $y' \in X \times \#N^n$ with $y \le y'$ and $\{y, y'\} \in T_n$, and also the same holds with $y' \le y$.

For $n = 1$, we have $T_1' = T_0 \times N = S_0 \times N$, while $T_1 = T_1' \cup \{s \times \{c_1(s)\} \mid s \in S^{(1)}\}$.  Thus
\begin{align*}
K_1 := 3
\end{align*}
works: for $t = \{y \le y'\} \in T_1$, either $t \in T_1'$, in which case we have $y = (x, i)$ and $y' \in \{(x, i), (x, i+1)\}$ for some $(x, i) \in X \times \#N$, or $t = s \times \{c_1(s)\}$ for some $s \in S^{(1)}$, in which case $y = (x, c_1(s))$ and $y' = (x', c_1(s))$ for some $s = \{x < x'\} \in S^{(1)}$, which is uniquely determined by $y$ by the coloring property of $c_1$; and similarly for $y' \le y$.

Now suppose for $n \ge 1$ that we are given $K_n$; we find $K_{n+1}$ by a similar argument.  Let $t = \{y \le y'\} \in T_{n+1}$.  Since $n+1 \ge 2$, $T_{n+1}$ adds no $0$- or $1$-simplices to $T_{n+1}'$, so $t \in T_{n+1}'$.  If $t \in T_n \times N$, then we have $y = (z, i)$ and $y' = (z', i')$ for some $\{z \le z'\} \in T_n$ and $\{i \le i'\} \in N$, i.e., $i' \in \{i, i+1\}$; there are thus $\le 2K_n$ choices for $y'$ given $y$ in this case.  Otherwise, we have $t \in \@T_n(T_n|(s \times \#N^n)) \times \{c_{n+1}(s)\} \subseteq S|s \times N^n \times \{c_{n+1}(s)\}$ for some $s \in S^{(n+1)}$, whence $y = (x, i_1, \dotsc, i_n, c_{n+1}(s))$ and $y' = (x', i_1', \dotsc, i_n', c_{n+1}(s))$ where $x, x' \in s$ and each $i_j' \in \{i_j, i_j+1\}$; by the coloring property of $c_{n+1}(s)$, $s$ is uniquely determined by $y$, hence there are at most $|s| = n+2$ choices for $x'$ and so at most $(n+2)2^n$ choices for $y'$ given $y$.  In total, there are thus at most
\begin{align*}
K_{n+1} := 2K_n + (n+2)2^n
\end{align*}
choices for $y' \ge y$; similarly for $y' \le y$.

Solving this recurrence yields
\begin{align*}
K_n = 2^{n-2}(n^2+3n+2).
\end{align*}
So, for each $n \ge 1$ and $y \in X \times \#N^n$, there are at most $2(K_n-1)$ distinct edges $\{y < y'\}$ or $\{y' < y\}$ in $T_n$; that is, there are at most
\begin{align*}
M_n := 2(K_n-1) = 2^{n-1}(n^2+3n+2)-2
\end{align*}
edges in $T_n$ containing $y$.  When $S = S_n$ is $n$-dimensional, truncating the above inductive construction at $T_n$ and taking $T := T_n$ proves the combinatorial part of \cref{thm:lfsc-simp} (with the weaker condition ``at most $M_n$'' in (ii)) in this case.

\subsection{Growing edges}

Still in the $n$-dimensional case, in order to modify $T_n$ so that each vertex is contained in exactly $M_n$ edges, we use the following simple construction.  Put $T_{n,0} := T_n$.  Given $T_{n,k}$, let $T_{n,k+1}$ be $T_{n,k}$ together with, for each vertex $y$ of $T_n$ with fewer than $M_n$ edges, a new vertex $y'$ and an edge $\{y, y'\}$.  Then clearly
\begin{align*}
T_n^* := \bigcup_{k \in \#N} T_{n,k}
\end{align*}
is still $n$-dimensional and has each vertex contained in exactly $M_n$ edges.  Also, clearly $T_{n,k+1}$ deformation retracts onto $T_{n,k}$; thus (by \cref{lm:dirunion}) the inclusion $T_n = T_{n,0} \subseteq T_n^*$ is a homotopy equivalence.  So we may replace $T_n$ with $T_n^*$ to get the stronger form of \cref{thm:lfsc-simp}(ii).

\subsection{The infinite-dimensional case}

Next we handle the case where $S$ is infinite-dimensional.  Let $i_n : (X \times \#N^n, T_n) `-> (X \times \#N^{n+1}, T_{n+1})$ be the composite
\begin{align*}
i_n : T_n \cong T_n \times \{0\} \subseteq T_n \times N \subseteq T_{n+1}' \subseteq T_{n+1}.
\end{align*}
From the above diagram ($*$), we get a commutative diagram
\begin{equation*}
\begin{tikzcd}[column sep=4em]
T_0 \dar[->>,"p_0"',"\simeq"] \rar[hook,"i_0"] & T_1 \dar[->>,"p_1"',"\simeq"] \rar[hook,"i_1"] & T_2 \dar[->>,"p_2"',"\simeq"] \rar[hook,"i_2"] & \dotsb \\
S_0 \rar[hook] & S_1 \rar[hook] & S_2 \rar[hook] & \dotsb
\end{tikzcd}
\tag{$\dagger$}
\end{equation*}
We would like to let $T$ be the direct limit of the top row of this diagram, but that might not be locally finite.  Instead, we take the mapping telescope of the top row, which can be defined explicitly as follows.

Let $\#N^\infty$ be the direct limit of $\#N \cong \#N \times \{0\} \subseteq \#N^2 \cong \#N^2 \times \{0\} \subseteq \#N^3 \subseteq \dotsb$; explicitly, $\#N^\infty$ can be taken as the subset of $\#N^\#N$ consisting of the eventually zero sequences.  Then $X \times \#N^\infty$ is the direct limit of the sequence $X \times \#N^0 --->{i_0} X \times \#N^1 --->{i_1} \dotsb$, with injections
\begin{align*}
i^n : X \times \#N^n \cong X \times \#N^n \times \{0\}^\infty \subseteq X \times \#N^\infty;
\end{align*}
and so the direct limit of the top row of ($\dagger$) can be taken explicitly as the ordered simplicial complex $\bigcup_{n \in \#N} i^n(T_n)$ on $X \times \#N^\infty$.

The \defn{mapping telescope} of the top row of ($\dagger$) is the complex $(Y, T)$ where
\begin{align*}
Y &:= \bigcup_{n \in \#N} (X \times \#N^n \times \{0\}^\infty \times \{n, n+1\}) \subseteq X \times \#N^\infty \times \#N, \\
T &:= \bigcup_{n \in \#N} (i^n(T_n) \times N|\{n, n+1\}).
\end{align*}
For each $n$, let
\begin{align*}
\~T_n := \bigcup_{m \le n} (i^m(T_m) \times N|\{m, m+1\}).
\end{align*}
It is easy to see that the projection $p_1 : X \times \#N^\infty \times \#N -> X \times \#N^\infty$ restricts to simplicial maps $\~T_n -> i^n(T_n)$ for each $n$, yielding a commutative diagram
\begin{equation*}
\begin{tikzcd}[column sep=4em]
\~T_0 \dar[->>,"p_1"',"\simeq"] \rar[hook] & \~T_1 \dar[->>,"p_1"',"\simeq"] \rar[hook] & \~T_2 \dar[->>,"p_1"',"\simeq"] \rar[hook] & \dotsb \\
i^0(T_0) \rar[hook] & i^1(T_1) \rar[hook] & i^2(T_2) \rar[hook] & \dotsb
\end{tikzcd}
\tag{$\ddagger$}
\end{equation*}
in which the horizontal maps are inclusions and the vertical maps are homotopy equivalences by the usual argument: the (geometric realization of the) first cylinder $i^0(T_0) \times N|\{0, 1\}$ in $\~T_n$ deformation retracts onto its base $i^0(T_0) \times \{1\}$, which is contained in the second cylinder $i^1(T_1) \times N|\{1, 2\}$, which deformation retracts onto its base $i^1(T_1) \times \{2\}$, etc.  Since, as noted above, the bottom row of ($\ddagger$) may be identified with the top row of ($\dagger$), combining the two diagrams and applying \cref{lm:dirunion} yields that $T = \bigcup_n \~T_n$ is homotopy equivalent to $S = \bigcup_n S_n$ (via the restriction of the projection $X \times \#N^\infty \times \#N -> X$).

Since, clearly, each $T_n$ being locally finite implies that $T$ is locally finite, this proves the combinatorial part of \cref{thm:lfsc-simp} in the infinite-dimensional case.

\subsection{The Borel case}

Finally, suppose we start with a Borel structuring $S$ of a countable Borel equivalence relation $(X, E)$ by simplicial complexes.  Recall that this means $S$ is a simplicial complex on $X$ with simplices contained in $E$-classes and such that $S$ is Borel in the standard Borel space of finite subsets of $X$.  We may then simply apply the above construction to the locally countable simplicial complex $(X, S)$, while observing that each step is Borel.  To do so, we first pick a Borel linear order on $X$ to turn $(X, S)$ into an ordered simplicial complex, and then pick the coloring functions $c_n : S^{(n)} -> \#N$ to be Borel (in fact restrictions of a single $c : S -> \#N$) using the following standard lemma:

\begin{lemma}[{Kechris-Miller \cite[7.3]{KM}}]
\label{lm:coloring}
Let $(X, E)$ be a countable Borel equivalence relation, and let $[E]^{<\infty}$ be the standard Borel space of finite subsets of $X$ which are contained in some $E$-class.  Then there is a Borel $\#N$-coloring of the intersection graph on $[E]^{<\infty}$, i.e., a Borel map $c : [E]^{<\infty} -> \#N$ such that if $A, B \in [E]^{<\infty}$ with $A \ne B$ and $A \cap B \ne \emptyset$ then $c(A) \ne c(B)$.
\end{lemma}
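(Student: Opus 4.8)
The plan is to stratify $[E]^{<\infty}$ by cardinality and colour, for each $k \ge 1$ separately, the intersection graph on the $k$-element members of $[E]^{<\infty}$: trivially for $k = 1$, via the Feldman--Moore theorem for $k = 2$, and by induction on $k$ for $k \ge 3$, where a $k$-element set is described in terms of its $(k-1)$-element subsets.

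First I would fix a Borel linear order $<$ on $X$, and write $[E]^{<\infty}_k$ for the Borel set of $k$-element members of $[E]^{<\infty}$. It suffices to construct, for each $k \ge 1$, a Borel $c_k \colon [E]^{<\infty}_k \to \mathbb N$ properly colouring the intersection graph on $[E]^{<\infty}_k$; then, fixing a Borel injection $\langle\,\cdot\,,\,\cdot\,\rangle \colon \mathbb N \times \mathbb N \to \mathbb N$, the map $c(A) := \langle |A|, c_{|A|}(A) \rangle$ (with $c(\emptyset) := 0$) is a Borel colouring as required, since for $A \ne B$ with $A \cap B \ne \emptyset$ we have $c(A) \ne c(B)$: immediately from the first coordinate if $|A| \ne |B|$, and from properness of $c_{|A|}$ if $|A| = |B|$. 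Take $c_1 \equiv 0$ (distinct singletons are disjoint). For $k = 2$, apply the Feldman--Moore theorem to write $E = \bigcup_n \operatorname{graph}(\gamma_n)$ with $\gamma_n \colon X \to X$ Borel involutions, and put $M_n^0 := \{\{x, \gamma_n(x)\} : \gamma_n(x) \ne x\}$ and $M_n := M_n^0 \setminus \bigcup_{m < n} M_m^0$; the $M_n$ are pairwise edge-disjoint Borel matchings covering $[E]^{<\infty}_2$, and $c_2(\{x,y\}) :=$ the unique $n$ with $\{x,y\} \in M_n$ is proper, since two distinct $2$-sets through a common point with the same $c_2$-value would be two edges of one matching meeting at that point.

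For $k \ge 3$, assuming $c_{k-1}$ is already defined, set, for $A = \{a_0 < \dots < a_{k-1}\} \in [E]^{<\infty}_k$ and $A_i := A \setminus \{a_i\}$,
\[
c_k(A) := \operatorname{code}\bigl(c_{k-1}(A_0), \dots, c_{k-1}(A_{k-1})\bigr)
\]
for a fixed Borel injection $\mathbb N^k \hookrightarrow \mathbb N$; this is Borel. Its properness reduces to the combinatorial statement that for distinct $A, B \in [E]^{<\infty}_k$ with $A \cap B \ne \emptyset$ there is a single index $i$ with $A_i \ne B_i$ and $A_i \cap B_i \ne \emptyset$: for such an $i$, $A_i$ and $B_i$ are adjacent in the intersection graph on $[E]^{<\infty}_{k-1}$, so $c_{k-1}(A_i) \ne c_{k-1}(B_i)$ by induction, and these occupy the $i$-th coordinates of $c_k(A)$ and $c_k(B)$, whence $c_k(A) \ne c_k(B)$. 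To prove the statement, observe $A_i \cap B_i = (A \cap B) \setminus \{a_i, b_i\}$, so $A_i \cap B_i = \emptyset$ for at most two values of $i$, and $A_i = B_i$ for at most one value of $i$ (this forces $|A \triangle B| = 2$ with the elements of $A \setminus B$ and of $B \setminus A$ in the same position). Hence at most three indices are ``bad'', which already gives a good $i$ when $k \ge 4$. For $k = 3$ one rules out all three indices being bad using transitivity of $<$: otherwise $|A \cap B| = 2$, say $A \cap B = \{x, x'\}$, with two indices $i_1 < i_2$ at which $\{a_i, b_i\} = \{x, x'\}$ but with $a_{i_1}, a_{i_2}$ and $b_{i_1}, b_{i_2}$ in opposite orders, forcing both $x < x'$ (from $A$) and $x' < x$ (from $B$).

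The main obstacle is precisely this combinatorial statement, and within it the case $k = 3$: the intersection graph has vertices of infinite degree, so no bounded-degree colouring argument is available, and the colouring must retain enough information about the positions of shared elements for the subset recursion to separate adjacent sets — it is the linear order on $X$ that makes the $k = 3$ bookkeeping work out. The remaining ingredients — Borelness of each step, Feldman--Moore, and the cardinality stratification — are routine.
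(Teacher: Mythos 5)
Your proof is correct, and since the paper does not prove this lemma at all (it is quoted as a black box from Kechris--Miller \cite[7.3]{KM}), your self-contained argument is necessarily a different route from the source. I checked the key points: the cardinality stratification with $c(A) := \langle |A|, c_{|A|}(A)\rangle$ correctly handles adjacent sets of different sizes; the Feldman--Moore step at $k=2$ is right (each $M_n^0$ is a matching because the $\gamma_n$ are involutions, and the $M_n$ partition $[E]^{<\infty}_2$), and it is genuinely needed, since your subset recursion degenerates at $k=2$ (the $(k-1)$-subsets would be singletons, all of one colour); and the combinatorial heart of the induction holds: $A_i \cap B_i = (A\cap B)\setminus\{a_i,b_i\}$ gives at most two ``disjoint-bad'' indices, an ``equal-bad'' index is unique and forces $|A\cap B| = k-1$ (so it cannot coincide with a disjoint-bad index, as $A_i \ne \emptyset$), which settles $k \ge 4$, while for $k=3$ an all-bad configuration would put the shared pair $\{x,x'\}$ in opposite relative positions in the two sorted triples, contradicting that both are increasing --- exactly as you say. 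The Borelness of each step (Borel linear order on $X$, the maps $A \mapsto A_i$, the pairing/coding functions) is routine, and your treatment of $\emptyset$ is harmless. For comparison, the Kechris--Miller proof codes a finite set uniformly from each of its points via a countable family of Borel functions generating $E$, rather than inducting on cardinality; your induction buys a more elementary, order-based bookkeeping at the cost of the special $k=2$ and $k=3$ cases, and either argument serves the paper's purpose, which only uses the statement (in fact only the restrictions of $c$ to each $S^{(n)}$).
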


It is now straightforward to check that the definitions of $T_n, T_n'$ are Borel; in the definition of $T_{n+1}'$, note that the union over $s \in S^{(n+1)}$ is disjoint, by the coloring property of $c_{n+1}$.  In the $n$-dimensional case, we end up with an ordered Borel simplicial complex $(X \times \#N^n, T_n)$ such that the projection $p_n : X \times \#N^n -> X$ is a homotopy equivalence $T_n -> S_n = S$.  Defining the countable Borel equivalence relation $F$ on $Y := X \times \#N^n$ by
\begin{align*}
(x, i_1, \dotsc, i_n) \mathrel{F} (x', i_1', \dotsc, i_n') \iff x \mathrel{E} x',
\end{align*}
we get that $T := T_n$ is a Borel structuring of $(Y, F)$; and we have a Borel embedding $f : (X, E) -> (Y, F)$ given by $f(x) := (x, 0, \dotsc, 0)$ such that $S|[x]_E$ is homotopy equivalent to $T|[f(x)]_F$ (via the map $p_n|([x]_E \times \#N^n) = p_n|[f(x)]_F : T|[f(x)]_F -> S|[x]_E$) for each $x \in X$.

For the stronger condition that each vertex is contained in exactly $M_n$ edges, it is straightforward that the definition of $T_n^*$ above can be taken to be a Borel simplicial complex on a standard Borel space $Y^* \supseteq Y$; letting $F^* \supseteq F$ be the obvious equivalence relation on $Y^*$ (so that each newly added edge in $T_n^*$ lies in one $F^*$-class), $T_n^*$ is a Borel structuring of $(Y^*, F^*)$ such that the composite $(X, E) --->{f} (Y, F) \subseteq (Y^*, F^*)$ is a homotopy equivalence on each class.  So we may replace $(Y, F, T_n)$ by $(Y^*, F^*, T_n^*)$.

Similarly, in the infinite-dimensional case, it is straightforward that the definition of the mapping telescope $T$ on $Y \subseteq X \times \#N^\infty \times \#N$ is Borel; so the same definitions of $F, f$ as in the finite-dimensional case work (note that $(x, 0, \dotsc, 0) \in Y$ for all $x \in X$).  This completes the proof of \cref{thm:lfsc-simp}, which implies \cref{thm:lfsc-cber}.

To prove \cref{thm:lfsc-compr}, apply \cref{thm:lfsc-cber} to get $(Y, F)$ with structuring $T$ and an embedding $f : (X, E) -> (Y, F)$; since $E$ is compressible, $f$ may be modified so that its image is $F$-invariant (see \cite[2.3]{DJK}), whence we get the desired structuring of $E$ by restricting $T$.

To prove \cref{cor:lfsc-cber}, let $S$ be the trivial structuring of $E$ given by $\{x_0, \dotsc, x_n\} \in S \iff x_0 \mathrel{E} \dotsb \mathrel{E} x_n$; this is obviously contractible on each $E$-class, so by \cref{thm:lfsc-simp} $E$ Borel embeds into some $F$ structurable by locally finite contractible complexes.  As before, this implies \cref{cor:lfsc-compr}.

\subsection{Some remarks}

In the dimension $n = 1$ case, the construction of $T_1$ above can be seen as a slight variant of the proof of Jackson-Kechris-Louveau \cite[3.10]{JKL}.  Thus the general case of our construction can be seen as a generalization of their proof to higher dimensions.

As mentioned in the Introduction, our construction is based on the proof of Whitehead \cite[Theorem~13]{Wh} that every countable CW-complex is homotopy equivalent to a locally finite complex of the same dimension.  That proof uses the same idea of ``spreading out'' cells along a ray to make their boundaries disjoint, but uses more abstract tools from homotopy theory in place of our explicit ``telescope'' construction $\@T_n$.  While it should be possible to give a more direct combinatorial transcription of Whitehead's proof, using (for example) simplicial sets, it does not seem that such an approach would yield a uniform bound $M_n$ on the number of edges containing a vertex in the $n$-dimensional case.

\section{Problems}
\label{sec:future}

There are several other nice properties of treeable countable Borel equivalence relations, for which we do not know if they generalize to higher dimensions.  Each of the following is known to be true in the case $n = 1$; see \cite[3.3, 3.12, 3.17]{JKL}.

\begin{problem}
Let $E, F$ be countable Borel equivalence relations such that $E$ Borel embeds into $F$.  If $F$ is structurable by $n$-dimensional contractible simplicial complexes, then must $E$ be also?
\end{problem}

\begin{problem}
Let $E$ be a countable Borel equivalence relation.  If $E$ is structurable by $n$-dimensional contractible simplicial complexes, then is $E$ necessarily structurable by $n$-dimensional locally finite contractible simplicial complexes?  (As noted in the Introduction, there cannot be a uniform bound on the number of edges containing each vertex.)
\end{problem}

\begin{problem}
Is there a single countably infinite $n$-dimensional contractible simplicial complex $S_n$, such that every countable Borel equivalence relation $E$ structurable by $n$-dimensional contractible simplicial complexes Borel embeds into an $F$ structurable by isomorphic copies of $S_n$?
\end{problem}

\begin{problem}
Is there a countable group $\Gamma_n$ with an $n$-dimensional Eilenberg-MacLane complex $K(\Gamma_n, 1)$, such that every countable Borel equivalence relation $E$ structurable by $n$-dimensional contractible simplicial complexes Borel embeds into the orbit equivalence relation of a free Borel action of $\Gamma_n$?
\end{problem}

\bigskip
\noindent Department of Mathematics

\noindent California Institute of Technology

\noindent Pasadena, CA 91125

\medskip
\noindent\nolinkurl{rchen2@caltech.edu}


\begin{thebibliography}{0000}

\bibitem[Ada]{Ada}  S.~Adams, \emph{Trees and amenable equivalence relations}, Ergodic Theory Dynam.\ Systems \textbf{10}~(1990), 1--14.

\bibitem[CK]{CK}  R.~Chen and A.~S.~Kechris, \emph{Structurable equivalence relations}, to appear in Fund.\ Math., 2017.

\bibitem[DJK]{DJK}  R.~Dougherty, S.~Jackson, and A.~S.~Kechris, \emph{The structure of hyperfinite Borel equivalence relations}, Trans.\ Amer.\ Math.\ Soc.\ \textbf{341(1)}~(1994), 193--225.

% \bibitem[Dug]{Dug}  D.~Dugger, \emph{Combinatorial model categories have presentations}, Adv.\ Math.\ \textbf{164}~(2001), 177--201.

\bibitem[Ga1]{Ga1}  D.~Gaboriau, \emph{Coût des relations d'équivalence et des groupes}, Invent.\ Math.\ \textbf{139}~(2000), 41--98.

\bibitem[Ga2]{Ga2}  D.~Gaboriau, \emph{Invariants $\ell^2$ de relations d'équivalence et de groupes}, Publ.\ Math.\ Inst.\ Hautes Études Sci.\ \textbf{95}~(2002), 93--150.

% \bibitem[GJ]{GJ}  P.~G.~Goerss and J.~F.~Jardine, \emph{Simplicial homotopy theory}, Progress in Mathematics, vol.~174, Birkhäuser Verlag, Basel, 1999.

\bibitem[Hat]{Hat}  A.~Hatcher, \emph{Algebraic topology}, Cambridge University Press, 2002.

\bibitem[Hjo]{Hjo}  G.~Hjorth, \emph{Treeable equivalence relations}, preprint, \url{http://www.math.ucla.edu/~greg/manytreeable.pdf}, 2008.

\bibitem[HK]{HK}  G.~Hjorth and A.~S.~Kechris, \emph{Rigidity theorems for actions of product groups and countable Borel equivalence relations}, Mem.\ Amer.\ Math.\ Soc.\ \textbf{177(833)}~(2005).

\bibitem[JKL]{JKL}  S.~Jackson, A.~S.~Kechris, and A.~Louveau, \emph{Countable Borel equivalence relations}, J.\ Math.\ Log.\ \textbf{2(1)}~(2002), 1--80.

\bibitem[KM]{KM}  A.~S.~Kechris and B.~D.~Miller, \emph{Topics in orbit equivalence}, Lecture Notes in Mathematics, vol.~1852, Springer-Verlag, Berlin, 2004.

\bibitem[Mky]{Mky}  G.~Markowsky, \emph{Chain-complete posets and directed sets with applications}, Algebra Univ.\ \textbf{6}~(1976), 53--68.

\bibitem[Mks]{Mks}  A.~Marks, \emph{Uniformity, universality, and computability theory}, J.\ Math.\ Log.\ \textbf{17(1)}~(2017), 1750003.

\bibitem[Spa]{Spa}  E.~Spanier, \emph{Algebraic topology}, McGraw-Hill, 1966.

\bibitem[Wh]{Wh}  J.~H.~C.~Whitehead, \emph{Combinatorial homotopy.\ I}, Bull.\ Amer.\ Math.\ Soc.\ \textbf{55}~(1949), 213--245.

\end{thebibliography}
\end{document}